\pgfplotsset{width=8cm,compat=1.9}
\def\today{\ifcase\month\or
  January\or February\or March\or April\or May\or June\or
  July\or August\or September\or October\or November\or December\fi
  \space\number\day, \number\year}
 \newtheorem{theorem}{Theorem}
 \newtheorem{lemma}[theorem]{Lemma}
 \newtheorem{proposition}[theorem]{Proposition}
 \theoremstyle{definition}
 \theoremstyle{remark}
 \newcommand{\mc}{\mathcal}
 \newcommand{\B}{\mc{B}}
 \newcommand{\D}{\mc{D}}
 \newcommand{\E}{\mc{E}}
 \newcommand{\C}{\mathbb{C}}
 \newcommand{\R}{\mathbb{R}}
 \newcommand{\Z}{\mathbb{Z}}
 \newcommand{\p}{\varphi}
 \newcommand{\hh}{\tfrac12}
  \renewcommand{\d}{\text{\rm d}}
 \newcommand{\du}{\text{\rm d}u}
 \newcommand{\dx}{\text{\rm d}x}
\newcommand{\ov}{\overline}
\renewcommand{\H}{\mc{H}}
\newcommand{\im}{{\rm Im}\,}
\newcommand{\re}{{\rm Re}\,}
\renewcommand{\D}{\Delta}
\newcommand{\ga}{\gamma}
\newcommand{\la}{\lambda}
\newcommand{\ft}{\widehat}
\newcommand{\al}{\alpha}
\newcommand{\be}{\beta}
\newcommand{\si}{\sigma}
\begin{document}
\title[Estimates for the Riemann zeta-function]{Bounding the log-derivative of the zeta-function}
\author[Chirre and Gon\c{c}alves]{Andr\'es Chirre and Felipe Gon\c{c}alves}
\subjclass[2010]{11M06, 11M26, 41A30}
\keywords{zeta-function, Riemann hypothesis, critical strip, Beurling-Selberg extremal problem, bandlimited functions, exponential type}
\address{Hausdorff Center for Mathematics, 53115 Bonn, Germany.}
\email{goncalve@math.uni-bonn.de}
\address{Department of Mathematical Sciences, Norwegian University of Science and Tech-$\text{ }$ $\text{ }$ $\text{ }$ nology, NO-7491 Trondheim, Norway.}
\email{carlos.a.c.chavez@ntnu.no}
\allowdisplaybreaks
\numberwithin{equation}{section}

\begin{abstract}
Assuming the Riemann hypothesis we establish explicit bounds for the modulus of the log-derivative of Riemann's zeta-function in the critical strip.
\end{abstract}

\maketitle


\section{Introduction}

Let $\zeta(s)$ be the Riemann zeta-function. In this paper we are interested in its log-derivative
$$
\dfrac{\zeta'}{\zeta}(s) = \sum_{n\geq 1} \dfrac{\Lambda(n)}{n^{s}} \quad (\re s>1)
$$
and its growth behaviour in the strip $1/2<\re s  < 1$ (above $\Lambda(n)$ is the von Mangoldt function). Let $\rho$ denote the zeros of $\zeta(s)$ in the critical strip. The Riemann hypothesis (RH) states that the zeros are aligned: $\rho=\hh+i\gamma$ with $\gamma\in\R$. Assuming RH, a classical estimate for the log-derivative of $\zeta(s)$ (see \cite[Theorem 14.5]{Tit}) establishes that
\begin{equation}  \label{classicalresult}
\dfrac{\zeta'}{\zeta}(\si+it)=O\big((\log t)^{2-2\sigma}\big),
\end{equation}
uniformly in $\hh+\delta\leq \sigma\leq 1-\delta$, for any fixed $\delta>0$. The purpose of this paper is to establish this bound in explicit form.


\begin{theorem}\label{thm:0}
Assume RH. Then
\begin{align}  \label{main}
\bigg|\dfrac{\zeta'}{\zeta}(\si+it)\bigg|\leq \frac{B_\sigma}{\si(1-\si)} (\log t)^{2-2\sigma} + O\bigg(\dfrac{(\log t)^{2-2\sigma}}{(\sigma-\hh)(1-\si)^2\log\log t}\bigg),\end{align}
uniformly in the range
	\begin{align} \label{range}
	\dfrac{1}{2}+\dfrac{\la_0+c}{\log\log t}\leq \sigma\leq   1 - \frac{c}{\sqrt{\log \log t}}  \quad \text{and} \quad t\geq 3,
	\end{align}
for any fixed small $c>0$, where $\la_0=0.771\ldots$ is such that $2\lambda_0 \tanh(\lambda_0)=1$ and
$$
B_\si =\sqrt{\frac{(3\si^4-17\si^3+19\si^2+4\si-4)(-\si^2+3\si-1)}{\si(2-\si)}}.
$$
In particular
$$
\bigg|\dfrac{\zeta'}{\zeta}(\si+it)\bigg|\leq \left(\frac{B_\sigma}{\si(1-\si)} +o(1)\right)(\log t)^{2-2\sigma},  \quad \text{for} \quad \tfrac12+\delta \leq \si \leq 1-\delta.
$$
\end{theorem}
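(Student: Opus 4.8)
\emph{Proof strategy.} Under RH we have $\zeta\neq0$ for $\re s>\hh$, so $\zeta'/\zeta$ is analytic there, and the Hadamard factorization of $\xi$ gives, for $s=\si+it$ with $\si>\hh$ and $a:=\si-\hh$,
\begin{equation*}
\frac{\zeta'}{\zeta}(s)=\sum_{\ga}\frac{1}{a+i(t-\ga)}-\frac12\frac{\Gamma'}{\Gamma}\Big(\frac s2\Big)+\frac12\log\pi-\frac1s-\frac1{s-1}-B,
\end{equation*}
the sum running over the ordinates $\ga$ of the nontrivial zeros, taken symmetrically. Splitting into real and imaginary parts, using that $B$ cancels the real parts of the $\rho^{-1}$-terms, and invoking Stirling, this reduces to
\begin{align*}
\re\frac{\zeta'}{\zeta}(\si+it)&=\sum_{\ga}\frac{a}{a^2+(t-\ga)^2}-\frac12\log\frac{t}{2\pi}+O\Big(\frac1t\Big),\\
-\im\frac{\zeta'}{\zeta}(\si+it)&=\sum_{\ga}\frac{t-\ga}{a^2+(t-\ga)^2}+O(1).
\end{align*}
So everything reduces to two-sided estimates for the Poisson sum $\sum_\ga P_a(t-\ga)$ with $P_a(x)=\tfrac{a}{a^2+x^2}$ and the conjugate-Poisson sum $\sum_\ga Q_a(t-\ga)$ with $Q_a(x)=\tfrac{x}{a^2+x^2}$, after which we combine the two, e.g.\ via $\big|\tfrac{\zeta'}{\zeta}\big|^2=(\re)^2+(\im)^2$, optimizing how the admissible mass is apportioned between the two kernels.

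Each zero-sum is handled by the Guinand--Weil explicit formula: for an entire test function $g$ of exponential type $2\pi\Delta$ with $g$ and $\ug$ suitably decaying,
\begin{equation*}
\sum_{\ga}g(t-\ga)=\frac{1}{2\pi}\log\frac{t}{2\pi}\int_{\R}g\;-\;\frac{1}{2\pi}\sum_{n\ge2}\frac{\Lambda(n)}{\sqrt n}\Big(\ug\big(\tfrac{\log n}{2\pi}\big)n^{-it}+\text{c.c.}\Big)\;+\;O\big(t^{-1/2}\big),
\end{equation*}
where the prime sum is supported on $n\le e^{2\pi\Delta}$ and is $\ll\|\ug\|_\infty\sum_{n\le e^{2\pi\Delta}}\Lambda(n)n^{-\si}\ll\|\ug\|_\infty\,\tfrac{e^{2\pi\Delta(1-\si)}}{1-\si}$. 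Sandwiching $P_a$ between its extremal minorant $m_a^-$ and majorant $m_a^+$ of exponential type $2\pi\Delta$ (and $Q_a$ between the corresponding odd extremal approximants) and feeding these into the formula, the leading deviation of the two zero-sums from the ``main value'' $\tfrac{1}{2\pi}\log\tfrac t{2\pi}\int_\R P_a=\tfrac12\log\tfrac t{2\pi}$ equals $\tfrac{1}{2\pi}\log\tfrac t{2\pi}$ times the $L^1$-distance from $P_a$ (resp.\ $Q_a$) to its extremal approximant, plus the prime-sum contribution.

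The arithmetic heart is thus the exact solution of these Beurling--Selberg problems. The extremal one-sided approximations to the Poisson kernel of exponential type $2\pi\Delta$, and the odd ($\sgn$-type) extremal approximants to the conjugate-Poisson kernel, are available in closed form (via the Gaussian-subordination/Carneiro--Littmann machinery), and their $L^1$-errors are explicit elementary functions of the single parameter $a\Delta$, built from $\tanh$, $\coth$ and rational terms; in particular the extremal majorant of $P_a$ undergoes a phase transition precisely at $2\pi a\Delta=2\la_0$, where $2\la_0\tanh\la_0=1$, which is the origin of the constant $\la_0$. One then takes $\Delta=\dfrac{\log\log t}{\pi}\,(1+o(1))$: this makes the archimedean deviation of order $\log t\cdot e^{-2\pi a\Delta}=\log t\cdot(\log t)^{-(2\si-1)}=(\log t)^{2-2\si}$ and the prime sum of order $e^{2\pi\Delta(1-\si)}=(\log t)^{2-2\si}$, so the two are balanced. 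The lower bound $\si\ge\hh+\tfrac{\la_0+c}{\log\log t}$ in the hypothesis is exactly what makes $2\pi a\Delta>2\la_0$ for this $\Delta$ (so the favourable branch of the extremal majorant applies), while $\si\le1-\tfrac{c}{\sqrt{\log\log t}}$ keeps the prime sum, the factors $(1-\si)^{-1}$, and all remaining errors of strictly lower order than the stated $O$-term, uniformly in $\si$.

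Finally, assembling the upper and lower bounds for $\re\tfrac{\zeta'}{\zeta}$ and $\im\tfrac{\zeta'}{\zeta}$, substituting the closed-form $L^1$-errors at $a\Delta$ and the exact value of $\Delta$, and optimizing the split of mass between $P_a$ and $Q_a$, one arrives after simplification at the main term $\tfrac{B_\si}{\si(1-\si)}(\log t)^{2-2\si}$, with the algebraic form of $B_\si$ coming directly out of those closed forms; the leftover pieces --- the Stirling remainder, the tails of $g,\ug$, the prime sum, the difference between $\log t$ and $\log\tfrac t{2\pi}$, and the error in rounding $\Delta$ to its optimum --- collapse into $O\big((\si-\hh)^{-1}(1-\si)^{-2}(\log\log t)^{-1}(\log t)^{2-2\si}\big)$. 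The ``in particular'' statement follows on a fixed compact subinterval $[\hh+\delta,1-\delta]$: for $t$ large that interval lies inside the admissible range, and there $(\log t)^{2-2\si}$ is at least a fixed positive power of $\log t$, so the $O$-term is $o\big((\log t)^{2-2\si}\big)$. \emph{The main obstacle} is precisely the determination --- in fully explicit closed form, and valid across the transition at $2\pi a\Delta=2\la_0$ --- of the extremal approximants to $P_a$ and $Q_a$ together with their exact $L^1$-errors as functions of $a\Delta$; granting those, and optimizing the free parameters $\Delta$ and the $P_a/Q_a$-split, the rest is bookkeeping, and it is exactly the losslessness of these closed forms that upgrades the Titchmarsh order $(\log t)^{2-2\si}$ to the sharp constant $B_\si$.
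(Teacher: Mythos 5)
Your high-level picture (explicit formula plus one-sided bandlimited approximations, optimized type $\Delta\asymp\log\log t$, combine real and imaginary parts via $|w|^2=(\re w)^2+(\im w)^2$) is the right flavor, and your treatment of $\re\frac{\zeta'}{\zeta}$ via the Poisson kernel $P_a$ does match how that piece was handled (in \cite{CChiM}, which the paper cites). But your route to $\im\frac{\zeta'}{\zeta}$ is where the proposal breaks. You propose to feed one-sided bandlimited approximants of the conjugate Poisson kernel $Q_a(x)=x/(x^2+a^2)$ directly into Guinand--Weil. That kernel is odd and decays only like $1/x$, so $Q_a\notin L^1$; any pointwise majorant $G^+\geq Q_a$ must also fail to be integrable, and the explicit formula (Proposition~\ref{GW}) requires $|h(s)|\ll(1+|s|)^{-(1+\delta)}$. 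Even after symmetrizing in $\gamma\mapsto-\gamma$, the relevant one-sided extremal problem for $Q_a$ is not something the Gaussian-subordination machinery of \cite{CLV} provides (that framework is built around \emph{even} subordinated functions); the paper explicitly states that its key function lies outside the scope of \cite{CLV}. There is no ``split of mass between $P_a$ and $Q_a$'' to optimize either: in the paper the combination $|\zeta'/\zeta|^2=(\re)^2+(\im)^2$ is just the pointwise identity, not a variational step.

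The actual mechanism for the imaginary part is indirect and this is where you misidentify the origin of $\la_0$. Rather than attack $\im\frac{\zeta'}{\zeta}$ via $Q_a$, the paper bounds $\re\bigl(\frac{\zeta'}{\zeta}\bigr)'(\si+it)=\sum_\gamma f_{\si-1/2}(\gamma-t)+O(t^{-2})$ with $f_a(x)=(x^2-a^2)/(x^2+a^2)^2$ (Lemma~\ref{Rep_lem} and Theorem~\ref{maintheorem}), then interpolates between that second-derivative bound and the bounds for the antiderivative $\log|\zeta(\si+it)|$ from \cite{CC}, using Lemma~\ref{lem:interp}, to obtain $\im\frac{\zeta'}{\zeta}$ (Theorem~\ref{theorem1}). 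The constant $\la_0$, with $2\la_0\tanh\la_0=1$, is \emph{not} a phase transition for the extremal majorant of $P_a$ (the Poisson kernel has positive mass and a single maximum; its Graham--Vaaler majorant has no such threshold, and indeed the paper remarks that for the real-part bound ``$\la_0+c$ can be replaced by just $c$''). It arises from the extremal majorant of $f_a$, which has zero total mass and a local maximum at $x=\sqrt3\,a$, forcing a change in the interpolation nodes (via the de Branges space $\H(\E^2)$ with $\E(z)=(i+\pi Ez)e^{-\pi i z}$, Lemma~\ref{lemma:maj}) when $\pi a\Delta<\la_0$. Your sketch omits both the interpolation step (Lemma~\ref{lem:interp}) and the custom zero-mass extremal construction, and these are exactly the new content needed to close the argument.
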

We believe that $\la_0$ is simply a by-product of our proof, although it is curious that such a number appears. It turns out that when $(\si-1/2)\log\log t$ is too small, our main technique delivers a bound of the form ${A_\si}{ (\log t )}/{\log \log t} $, however the calculations are lengthy and convoluted, and this it not the purpose of this note. Moreover, a conjecture of Ki \cite{Ki}, related to the distribution of the zeros of $\zeta'(s)$, states that the bound $O((\log t)^{2-2\si} )$ still holds in the range $\si \geq 1/2+c/\log t$, but this lies outside of what this technique can accomplish. Theorem \ref{thm:0} is derived by combining Theorem \ref{theorem1} and estimates for the real part of the log-derivative of $\zeta(s)$ obtained in \cite[Theorem 2]{CChiM}:
$$
\left|\re \dfrac{\zeta'}{\zeta}(\si+it)\right| \leq \bigg(\dfrac{-\si^2 + 3\si - 1}{\sigma(1-\sigma)}\bigg) (\log t)^{2-2\si} + O\left(\frac{(\log t)^{2-2\si}}{(\sigma-\hh)(1-\si)^2\log \log t}\right),
$$
uniformly in the range \eqref{range} (in fact $\la_0+c$ can be replaced by just $c$).


\begin{theorem} \label{theorem1} 
	Assume RH. Then
	$$
\left| \im\dfrac{\zeta'}{\zeta}(\sigma+it) \right| \leq \frac{C_\si}{\si(1-\si)} (\log t)^{2-2\sigma} + O\bigg(\dfrac{(\log t)^{2-2\sigma}}{(\sigma-\hh)(1-\si)^2\log\log t}\bigg),
	$$
	uniformly in the range \eqref{range}, where
	$$
	C_\si=\sqrt{\frac{2(-\si^2 + 5\si - 2)(-\si^2 + 3\si - 1)(-\si^2 + \si + 1)}{\si(2-\si)}}.
	$$
\end{theorem}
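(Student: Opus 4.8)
The plan is to start from the Hadamard-type explicit formula for $\zeta'/\zeta$, writing
\est{
\frac{\zeta'}{\zeta}(s) = \sum_{\rho} \frac{1}{s-\rho} - \frac{1}{2}\log\pi + \frac{1}{2}\frac{\Gamma'}{\Gamma}\left(\frac{s}{2}+1\right) - \frac{1}{s-1}
}
and, under RH with $\rho = \tfrac12 + i\gamma$, to take imaginary parts at $s = \sigma + it$. The archimedean terms contribute only a bounded (indeed $O(\log t)$ in the worst case, but here $O(1)$ after the derivative cancellations) error, so the main object is $\sum_\gamma \im\frac{1}{\sigma + it - \tfrac12 - i\gamma} = -\sum_\gamma \frac{t-\gamma}{(\sigma-\tfrac12)^2 + (t-\gamma)^2}$. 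The strategy, which is the Beurling--Selberg extremal-function method, is to dominate this sum by a band-limited majorant/minorant: one chooses a function $F$ of exponential type $2\pi\Delta$ (with $\Delta \asymp \log\log t / \log t$, or the reciprocal scale in the $\gamma$-variable) such that $F(x) \ge \frac{x}{(\sigma-\tfrac12)^2 + x^2}$ (and a companion minorant), so that $\sum_\gamma F(t-\gamma)$ can be evaluated by the explicit formula / Guinand--Weil formula, turning the sum over zeros into $\widehat F(0)$ times the zero-counting density $\frac{1}{2\pi}\log\frac{t}{2\pi}$ plus a prime-sum error controlled by $\widehat F$ supported in $[-\Delta,\Delta]$.

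The key steps, in order, would be: (1) Reduce, via the explicit formula and trivial bounds on the gamma-factor terms, the estimation of $\im\frac{\zeta'}{\zeta}(\sigma+it)$ to estimating $\sum_\gamma h(t-\gamma)$ where $h(x) = \frac{x}{a^2 + x^2}$ with $a = \sigma - \tfrac12$; note $h$ is odd and, crucially, $\widehat h$ is explicitly $-i\pi\,\sgn(\xi)e^{-2\pi a|\xi|}$-type (an exponentially decaying function), which is exactly the feature that makes the extremal problem tractable. (2) Invoke the solution of the Beurling--Selberg extremal problem for this specific odd kernel --- essentially the ``one-sided'' or ``Hilbert-type'' extremal problem --- to obtain functions $F_\pm$ of exponential type $2\pi\delta$ with $F_-\le h\le F_+$ and with $\int(F_\pm - h)$ as small as the Paley--Wiener/Vaaler machinery permits; the size of $\int(F_+ - h)\,dx$ relative to $\widehat h(0)$-type quantities is what produces the constant $C_\sigma$. (3) Apply the Guinand--Weil explicit formula to $\sum_\gamma F_\pm(t-\gamma)$: the main term is $\widehat{F_\pm}(0)\cdot\frac{1}{2\pi}\log t$, the prime sum is $O(\delta^{-1}\cdot(\text{something})\cdot t^{\delta/2}\text{-free because }\delta\to 0)$ and is absorbed into the error provided $\delta \asymp (\log\log t)/\log t$ or the analogous scaling; the archimedean part of the explicit formula gives the secondary term. (4) Optimize the free parameter $\delta$ (equivalently the type of $F_\pm$), and expand everything in the regime where $a = \sigma - \tfrac12$ and $\delta\log t$ are the relevant scales; matching the extremal integral against $a$ and carrying out the algebra yields $\frac{C_\sigma}{\sigma(1-\sigma)}(\log t)^{2-2\sigma}$ for the leading term with the stated $C_\sigma$, and the rest goes into $O\!\big(\frac{(\log t)^{2-2\sigma}}{(\sigma-1/2)(1-\sigma)^2\log\log t}\big)$.

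I expect the main obstacle to be Step (2)--(4) combined: namely, pinning down the \emph{sharp} constant in the extremal majorant/minorant problem for the conjugate kernel $h(x) = x/(a^2+x^2)$ \emph{with the parameter $a$ coupled to the type $\delta$ in the right way}, and then tracking how the optimal choice of $\delta$ (which is where the $(\log t)^{2-2\sigma}$ power and the threshold $\lambda_0$ with $2\lambda_0\tanh\lambda_0 = 1$ must emerge) interacts with the error terms. The appearance of $\tanh$ strongly suggests the extremal function is built from $\frac{1}{\sinh}$ or $\frac{\cosh}{\sinh^2}$-type interpolations at the integers (the Hilbert-kernel analogue of Vaaler's construction), and getting the constant $C_\sigma = \sqrt{\frac{2(-\sigma^2+5\sigma-2)(-\sigma^2+3\sigma-1)(-\sigma^2+\sigma+1)}{\sigma(2-\sigma)}}$ to come out exactly will require a careful Cauchy--Schwarz or AM--GM balancing between the majorant-integral cost and the minorant-integral cost (the product structure under the square root is the tell-tale sign of such a balancing), together with the constraint coming from the range \eqref{range}. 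A secondary technical point is ensuring the prime-sum error in the explicit formula is genuinely lower-order: this needs $\delta$ small enough that $t^{\delta}$ stays subpolynomial, which is why the admissible $\sigma$ cannot get too close to $1$ (the $1 - c/\sqrt{\log\log t}$ barrier) nor too close to $1/2$ (the $\lambda_0 + c$ barrier).
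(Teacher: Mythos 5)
Your plan is genuinely different from the paper's, and unfortunately it runs into a real obstruction that the paper's route is specifically designed to avoid. You propose to bound $\im\frac{\zeta'}{\zeta}(\sigma+it)$ directly by writing it as a sum over zeros of the odd kernel $h(x)=x/(a^2+x^2)$ with $a=\sigma-\tfrac12$, then applying band\-limited majorants/minorants and the Guinand--Weil formula. But this kernel is \emph{not} in $L^1(\R)$ (it decays only like $1/x$), and correspondingly the partial-fraction expansion for $\zeta'/\zeta$ is of the form $\sum_\rho(\tfrac{1}{s-\rho}+\tfrac{1}{\rho})$: the $1/\rho$ counterterms are essential for convergence and destroy the ``fixed kernel evaluated at $\gamma-t$'' structure when you take imaginary parts. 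On top of that, the Guinand--Weil formula in the form used here (Proposition~\ref{GW}) is stated for \emph{even} test functions, and the natural symmetrization $\tfrac12 h(t-x)+\tfrac12 h(t+x)$ for an odd $h$ yields something that is small on the zero side but cannot be matched to an extremal majorant of a single fixed kernel. This is not a cosmetic issue; it is the reason there is no known direct extremal-function proof of a sharp bound for $\im\zeta'/\zeta$ on a vertical line.

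The paper sidesteps all of this by an interpolation argument (Lemma~\ref{lem:interp}): set $\p(t)=-\log|\zeta(\sigma+it)|$, so that $\p'(t)=\im\frac{\zeta'}{\zeta}(\sigma+it)$ and $\p''(t)=\re\bigl(\frac{\zeta'}{\zeta}\bigr)'(\sigma+it)$. One bounds $\p$ using the Carneiro--Chandee estimates for $\log|\zeta|$ (\cite{CC}), bounds $\p''$ by a fresh extremal-function argument applied to the \emph{even}, $L^1$ kernel $f_a(x)=(x^2-a^2)/(x^2+a^2)^2$ (this is Theorem~\ref{maintheorem}), and then derives a bound on $\p'$ via a mean-value/averaging inequality. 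The ``Cauchy--Schwarz/AM--GM balancing'' you correctly sense in the shape of $C_\sigma=\sqrt{2\alpha_2\beta_2(\alpha_0+\beta_0)/(\alpha_2+\beta_2)}$ is real, but it lives in the interpolation lemma (balancing the contribution of $\p$ against that of $\p''$ over an averaging window), not in the extremal problem. Similarly, the threshold $\lambda_0$ with $2\lambda_0\tanh\lambda_0=1$ does not arise from optimizing $\delta$ against the prime sum; it is the point at which the coefficient $C=(2\lambda\tanh\lambda-1)/\cosh^2\lambda$ in the natural majorant of $f_a$ changes sign, forcing a different majorant construction (involving de Branges spaces and Littmann's generalized Poisson summation) for small $\lambda$. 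So while your physical intuition about where the constants and thresholds ``should'' come from is reasonable, the route you sketch does not close, and the decisive idea of the paper---passing to $\re(\zeta'/\zeta)'$ and $\log|\zeta|$, both tractable by even-kernel extremal methods, and then interpolating---is missing from the proposal.
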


Theorem \ref{theorem1} is obtained using a known interpolation technique \cite[Section 6]{CChiM}. Essentially, to bound the asymptotic growth of $\im \tfrac{\zeta'}{\zeta}(s)$ one can bound instead its primitive $\log |\zeta(s)|$ (see \cite[Theorems ~1 and ~2]{CC}) and its derivative (Theorem \ref{maintheorem}). 
\smallskip

\begin{theorem} \label{maintheorem}
	Assume RH . Then
	\begin{equation} \label{second_result} 
	\re\bigg(\dfrac{\zeta'}{\zeta}\bigg)'(\sigma+it)\leq \bigg(\dfrac{-2\sigma^2+2\si+2}{\sigma(1-\sigma)}\bigg)\log\log t\,(\log t)^{2-2\sigma}+O\bigg(\dfrac{(\log t)^{2-2\sigma}}{(\sigma-\hh)(1-\sigma)^2}\bigg),
	\end{equation}
	and 
	\begin{equation}  
	\re\bigg(\dfrac{\zeta'}{\zeta}\bigg)'(\sigma+it)\geq -\bigg(\dfrac{-2\sigma^2+6\sigma-2}{\sigma(1-\sigma)}\bigg)\log\log t\,(\log t)^{2-2\sigma}+O\bigg(\dfrac{(\log t)^{2-2\sigma}}{(\sigma-\hh)(1-\sigma)^2}\bigg),
	\end{equation} 
	uniformly in the range
	\begin{align} \label{range2}
	\dfrac{1}{2}+\dfrac{\la_0}{\log\log t}\leq \sigma\leq   1 - \frac{c}{\sqrt{\log \log t}}  \quad \text{and} \quad t\geq 3,
	\end{align}
	 for any fixed $c>0$.
\end{theorem}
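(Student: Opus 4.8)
The plan is to realize $\re(\zeta'/\zeta)'(\sigma+it)$ as a sum over zeros and then extremize against bandlimited minorants/majorants, following the Beurling--Selberg philosophy used for $\log|\zeta|$ and $\im(\zeta'/\zeta)$. First I would start from the Hadamard-type partial fraction expansion: under RH, with $\rho=\tfrac12+i\gamma$,
\[
\frac{\zeta'}{\zeta}(s)=\sum_{\gamma}\frac{1}{s-\tfrac12-i\gamma}+\text{(lower order / archimedean terms)},
\]
and differentiate once to get
\[
\left(\frac{\zeta'}{\zeta}\right)'(s)=-\sum_{\gamma}\frac{1}{(s-\tfrac12-i\gamma)^2}+O(1).
\]
Writing $s=\sigma+it$ and $\beta=\sigma-\tfrac12$, the real part of $-(s-\tfrac12-i\gamma)^{-2}$ equals $\dfrac{(t-\gamma)^2-\beta^2}{((t-\gamma)^2+\beta^2)^2}$, which is a fixed even kernel $K_\beta(t-\gamma)$ of the variable $t-\gamma$, decaying like $(t-\gamma)^{-2}$. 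So the problem reduces to bounding $\sum_\gamma K_\beta(t-\gamma)$ from above and below, using only knowledge of the counting function $N(t)$ (equivalently, the density of zeros $\sim \tfrac{1}{2\pi}\log t$), exactly the setup of \cite[Section 6]{CChiM}.

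Next I would run the extremal-function machinery: to bound $\sum_\gamma K_\beta(t-\gamma)$ above, choose an entire majorant $M$ of exponential type $2\pi\Delta$ with $M(x)\ge K_\beta(x)$ for real $x$, and for the lower bound a minorant $m$ of the same type; then Guinand--Weil / the explicit formula turns $\sum_\gamma M(t-\gamma)$ into a main term $\widehat{M}(0)\cdot\tfrac{1}{2\pi}\log t$ plus contributions from $\widehat{M}(\xi)$ for $|\xi|\le\Delta$ weighted by prime sums of size $O(e^{\Delta/2})$; balancing $\Delta\asymp\log\log t$ (so that $e^{\Delta/2}\asymp(\log t)^{1/2}$ stays under control after the $(\log t)^{-2\sigma}$-type scaling, which is where the extra $\log\log t$ factor and the $(\log t)^{2-2\sigma}$ come from) yields the stated shape. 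The constants $\dfrac{-2\sigma^2+2\sigma+2}{\sigma(1-\sigma)}$ and $\dfrac{-2\sigma^2+6\sigma-2}{\sigma(1-\sigma)}$ should emerge as the values of the extremal $\widehat{M}(0)$ and $\widehat{m}(0)$ after optimizing over the free parameters, once $\beta=\sigma-\tfrac12$ is rescaled against $\Delta$; the asymmetry between the two constants reflects the fact that $K_\beta$ changes sign (it is negative for $|x|<\beta$), so majorizing and minorizing a sign-changing kernel are genuinely different problems and produce different optimal functions.

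The main obstacle I anticipate is the construction and optimization of the extremal bandlimited functions for the specific kernel $K_\beta(x)=\dfrac{x^2-\beta^2}{(x^2+\beta^2)^2}$, together with tracking how the optimal type $\Delta$ must scale with $\beta=\sigma-\tfrac12$. Because $\beta$ itself tends to $0$ like $1/\log\log t$ at the left edge of the range \eqref{range2}, the kernel $K_\beta$ degenerates (it concentrates near the origin), and one must check that the extremal problem still has a well-behaved solution uniformly; this is presumably exactly where the threshold $\sigma\ge \tfrac12+\lambda_0/\log\log t$ with $2\lambda_0\tanh\lambda_0=1$ enters — the condition $2\lambda_0\tanh(\lambda_0)=1$ has the flavour of a transcendental equation arising from optimizing a ratio involving $\tanh$ (which shows up naturally in Fourier transforms of functions related to $1/\cosh$ or to the Poisson-type kernel here). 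A secondary technical point is controlling the error term $O\big((\log t)^{2-2\sigma}/((\sigma-\tfrac12)(1-\sigma)^2)\big)$ uniformly, which requires being careful with the prime-sum side of the explicit formula and with the tails of $K_\beta$ against the zero-counting error $O(\log t)$; I would handle this by splitting the zero sum at $|t-\gamma|\le 1$ and using the local bound $N(t+1)-N(t)=O(\log t)$ for the near zeros and the $(t-\gamma)^{-2}$ decay for the far ones, as in \cite{CChiM}.
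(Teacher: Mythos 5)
Your plan follows the same broad strategy as the paper: write $\re(\zeta'/\zeta)'(\sigma+it)$ as $\sum_\gamma f_a(\gamma-t)+O(1)$ with $a=\sigma-\tfrac12$ and $f_a(x)=\tfrac{x^2-a^2}{(x^2+a^2)^2}$, bracket $f_a$ between bandlimited minorant and majorant of type $2\pi\Delta$, feed them into the Guinand--Weil explicit formula, and set $\pi\Delta=\log\log t$. This is indeed the paper's route, and your identification of the kernel and of the rough scaling is correct.

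However, there are two genuine gaps. First, you write that the constants ``should emerge as the values of the extremal $\widehat{M}(0)$ and $\widehat{m}(0)$,'' and that the prime sums of ``size $O(e^{\Delta/2})$'' stay ``under control.'' This is not how the constants arise. In the upper bound the prime sum is \emph{not} an error term: with $\widehat{U}_{a,\Delta}$ supported on $[-\Delta,\Delta]$ and of size comparable to $\widehat{f_a}(y)=-2\pi^2|y|e^{-2\pi a|y|}$, the prime side is of order $\Delta\,e^{(1-2a)\pi\Delta}/(\tfrac12-a)$, i.e.\ $\log\log t\,(\log t)^{2-2\sigma}/(1-\sigma)$ once $\pi\Delta=\log\log t$, which is the same order as the archimedean main term. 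The stated constant $\tfrac{-2\sigma^2+2\sigma+2}{\sigma(1-\sigma)}$ is the sum of three contributions of this order: the archimedean integral $\propto\widehat{U}_{a,\Delta}(0)$, the pole term $2h(i/2)$, and the prime sum; $\widehat{U}_{a,\Delta}(0)$ alone only produces the constant $2$. The asymmetry between the two constants is thus primarily due to the prime sum being discardable for the minorant but a genuine main term for the majorant, not merely to $f_a$ changing sign. Relatedly, you omit the device that makes discarding possible at all: the paper symmetrizes with the operator $M_t=\tfrac12 T_t+\tfrac12 T_{-t}+\mathrm{Id}$, whose Fourier multiplier $2\cos^2(\pi t\xi)$ is nonnegative; combined with $\widehat{L}_{a,\Delta}(y)<0$ on $(-\Delta,\Delta)$ this makes the minorant's prime sum nonpositive and dischargeable, and with $\widehat{U}_{a,\Delta}(y)>\widehat{f_a}(y)$ it lets one bound the majorant's prime sum by $2\sum_{n\le e^{2\pi\Delta}}\Lambda(n)(\log n)/n^{a+1/2}$. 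Without this step the prime sum is not sign-definite and your argument does not close.

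Second, you flag the extremal construction as the main obstacle but do not identify the actual obstruction: $f_a$ has \emph{zero mass} ($\int f_a=0$), which places it outside the Gaussian-subordination framework of \cite{CLV} and its relatives, so the standard off-the-shelf constructions do not apply. The paper works around this with an explicit pair $L_{a,\Delta}$, $U_{a,\Delta}$, verifying optimality by Poisson summation (for the minorant) and by Littmann's generalized quadrature together with de Branges space theory (for the majorant when $\lambda=\pi a\Delta<\lambda_0$); the threshold $2\lambda_0\tanh\lambda_0=1$ is exactly the point where the majorant's form must change. Your heuristic remark about $\tanh$ is in the right spirit, but as stated the plan gives no construction and hence no proof; a complete argument must exhibit these extremal functions, compute $\widehat{L}_{a,\Delta}(0)$ and $\widehat{U}_{a,\Delta}(0)$, and verify the sign conditions on $\widehat{L}_{a,\Delta}$ and $\widehat{U}_{a,\Delta}$ on $(-\Delta,\Delta)$.
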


\smallskip

The main technique to prove these theorems revolves in bounding a certain sum over the ordinates of zeta-zeros
$$
\sum_{\ga} f(\ga-t),
$$
where $f$ is some explicit real function that varies according to the problem of study. The key idea is to replace $f$ by explicit bandlimited majorants and minorants that are in turn admissible for the Guinand-Weil explicit formula (Proposition \ref{GW}). From there estimating the sum is usually easier. This bandlimited approximation idea originates in the works of Beurling and Selberg (see \cite[Introduction]{V}), and was first employed in this form by Goldston and Gonek \cite{GG}, and Chandee and Soundararajan \cite{CS}, but many others after them (see \cite{CC, CCM, CChi, CChiM, Chi} to name a few). In our specific case, $f=f_a$ as in \eqref{fa}, which has \emph{zero mass} and therefore is not in the scope of the machinery developed in \cite{CLV}, nor its close relatives (the constructions in \cite{CLV} are regarded as the most general thus far and have been used widely).  Nevertheless, we are able to overcome this difficulty with a very simple optimal construction which, in the majorant case, requires some basic results in the theory of de Branges spaces.

We recall that, without assuming RH, explicit bounds for $\tfrac{\zeta'}{\zeta}(s)$ are given by Trudgian \cite{Trudgian} in a zero-free region for $\zeta(s)$.

\smallskip





\section{Lemmata}
For a given $a>0$ we let 
\begin{align}\label{fa}
f_{a}(x)=\dfrac{x^2-a^2}{\big(x^2+a^2\big)^2}.
\end{align}

\begin{lemma}[Representation lemma]\label{Rep_lem}
	Assume RH. We have
	\begin{equation*}
		\re\bigg(\dfrac{\zeta'}{\zeta}\bigg)'(\sigma+it)= \sum_\ga f_{\sigma-1/2}(\gamma-t) +  O\left(\dfrac{1}{t^2}\right),
		\end{equation*}
for $\hh < \sigma \leq 1$ and $t\geq 3$, where the above sum runs over the ordinates of the non-trivial zeros $\rho = \tfrac12 + i \gamma$ of $\zeta(s)$. 
\end{lemma}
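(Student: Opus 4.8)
The plan is to start from the Hadamard-factorization-based partial fraction expansion of $\zeta'/\zeta$. Recall that, unconditionally, one has
\[
\frac{\zeta'}{\zeta}(s) = B - \frac{1}{s-1} - \frac{1}{2}\frac{\Gamma'}{\Gamma}\Big(\tfrac{s}{2}+1\Big) + \sum_{\rho}\Big(\frac{1}{s-\rho}+\frac{1}{\rho}\Big),
\]
where $B$ is an explicit constant and the sum runs over nontrivial zeros. Differentiating once in $s$ kills the constant $B$ and the $1/\rho$ terms, giving
\[
\Big(\frac{\zeta'}{\zeta}\Big)'(s) = \frac{1}{(s-1)^2} - \frac{1}{4}\Big(\frac{\Gamma'}{\Gamma}\Big)'\Big(\tfrac{s}{2}+1\Big) - \sum_{\rho}\frac{1}{(s-\rho)^2}.
\]
Now I would substitute $s=\sigma+it$ and invoke RH so that $\rho = \tfrac12+i\gamma$, hence $s-\rho = (\sigma-\tfrac12) + i(t-\gamma)$. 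Writing $a=\sigma-\tfrac12>0$ and $x=\gamma-t$, a direct computation gives
\[
\re\frac{1}{(s-\rho)^2} = \re\frac{1}{(a - i x)^2} = \frac{a^2-x^2}{(a^2+x^2)^2} = -f_a(x),
\]
so that $-\re\sum_\rho (s-\rho)^{-2} = \sum_\gamma f_{\sigma-1/2}(\gamma-t)$, which is exactly the main term. (Convergence/rearrangement of the sum over $\rho$ into a sum over $\gamma$ follows in the standard way by pairing; on RH each ordinate contributes a single real term and $f_a(x)=O(1/x^2)$ ensures absolute convergence.)

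It then remains to show that the two non-zero-sum contributions are $O(1/t^2)$ for $\hh<\sigma\le 1$, $t\ge 3$. The term $\re\,(s-1)^{-2} = \re\,((\sigma-1)+it)^{-2}$ has modulus at most $((\sigma-1)^2+t^2)^{-1} \le t^{-2}$, which is already of the claimed size. For the Gamma term, I would use the standard asymptotic $(\Gamma'/\Gamma)'(z) = \psi'(z) = \tfrac1z + \tfrac{1}{2z^2} + O(|z|^{-3})$ valid as $|z|\to\infty$ in $\re z>0$ (equivalently the Stirling-type expansion / the series $\psi'(z)=\sum_{n\ge0}(z+n)^{-2}$); with $z=\tfrac{s}{2}+1 = (\tfrac{\sigma}{2}+1) + i\tfrac{t}{2}$ we have $|z|\asymp t$ and $\re z \ge \tfrac54$, so $\psi'(z) = O(1/t)$. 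This gives only $O(1/t)$, not $O(1/t^2)$, so the sharper bookkeeping is needed: one takes the real part, $\re\psi'(z) = \re\tfrac1z + O(1/t^2) = \tfrac{\re z}{|z|^2} + O(1/t^2) = O(1/t^2)$ since $\re z$ is bounded while $|z|^2\asymp t^2$. Combining, $-\tfrac14\re\psi'(\tfrac s2+1) = O(1/t^2)$, and adding the three pieces yields the lemma.

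The only mild obstacle is this last point: a naive estimate of the Gamma factor gives $O(1/t)$, so one must be careful to extract the real part and observe the cancellation that upgrades it to $O(1/t^2)$ — matching the error term in the statement. Everything else is bookkeeping: justifying term-by-term differentiation of the Hadamard product (standard, via uniform convergence on compact subsets away from zeros and poles), the pairing of conjugate zeros, and the elementary identity $\re(a-ix)^{-2} = -f_a(x)$. I expect no genuine difficulty beyond keeping the error terms at the right order.
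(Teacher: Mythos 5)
Your proposal is correct and follows essentially the same route as the paper: differentiate the standard partial-fraction expansion of $\zeta'/\zeta$, take real parts so that $-\re(s-\rho)^{-2}=f_{\sigma-1/2}(\gamma-t)$ under RH, and absorb the $(s-1)^{-2}$ and digamma-derivative contributions into $O(1/t^2)$. The paper dispatches the $\Gamma$ term by simply citing Stirling, whereas you spell out the crucial cancellation in $\re\psi'(z)=\re z/|z|^2+O(|z|^{-2})$ that upgrades the naive $O(1/t)$ to the needed $O(1/t^2)$ — a useful clarification of the same step rather than a different method.
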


\begin{proof}
Let $s=\sigma+it$ and $t \geq 3$. From the partial fraction decomposition for $\zeta'(s)/\zeta(s)$ (cf. \cite[Eq. 2.12.7]{Tit}), we have
	\begin{align}\label{partial_fraction_dec}
	\dfrac{\zeta'}{\zeta}(s) & = \displaystyle\sum_{\rho}\bigg(\dfrac{1}{s-\rho}+\frac{1}{\rho}\bigg)-\dfrac{1}{2}\dfrac{\Gamma'}{\Gamma}\bigg(\dfrac{s}{2}+1\bigg)+B+\frac{1}{2}\log\pi - \frac{1}{s-1},
	\end{align}
	with $B=-\sum_{\rho}\re(1/\rho)$. Differentiating and taking its real part we get
	$$
	\re\bigg(\dfrac{\zeta'}{\zeta}\bigg)'(\sigma+it)= \sum_{\gamma}f_{\sigma-1/2}(\gamma-t)-\dfrac{1}{4}\,\re\bigg(\dfrac{\Gamma'}{\Gamma}\bigg)'\Big(\dfrac{\sigma}{2}+1+\dfrac{it}{2}\Big)+O\left(\frac{1}{t^2}\right).
	$$
Using Stirling's formula, that guarantees the $\Gamma$ term is $O(1/t^2)$, we conclude.
\end{proof}

\smallskip

As always, the crucial tool to work with sums as in Lemma \ref{Rep_lem} is the Guinand-Weil explicit formula (see \cite[Lemma 8]{CChiM}), which for even functions reads as follows.

\begin{proposition}[Guinand-Weil explicit formula] \label{GW}
	Let $h(s)$ be analytic in the strip $|\im{s}|\leq \tfrac12+\varepsilon$, for some $\varepsilon>0$, such that $|h(s)|\ll(1+|s|)^{-(1+\delta)}$, for some $\delta>0$. Assume further that $h$ is even. Then
	\begin{align*}
	\displaystyle\sum_{\rho}h\left(\frac{\rho-\frac12}{i}\right) & = \dfrac{1}{2\pi}\int_{-\infty}^{\infty}h(u)\,\re{\dfrac{\Gamma'}{\Gamma}\left(\dfrac{1+2iu}{4}\right)}\,\du + 2\, h\left(\dfrac{i}{2}\right)-\dfrac{\log \pi}{2\pi}\widehat{h}(0) \\ &  \, \quad -\dfrac{1}{\pi}\displaystyle\sum_{n\geq2}\dfrac{\Lambda(n)}{\sqrt{n}}\widehat{h}\left(\dfrac{\log n}{2\pi}\right), 
	\end{align*}
	where $\rho = \beta + i \gamma$ are the non-trivial zeros of $\zeta(s)$ and
	$$
	\widehat{h}(y) = \int_{-\infty}^\infty h(x)e^{-2\pi i x y} \d x
		$$
		is the Fourier transform\footnote{We shall use this definition of the Fourier transform throughout the paper.} of $h$.
\end{proposition}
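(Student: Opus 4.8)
The plan is to reduce the sum over zeros to a single contour integral of $\xi'/\xi$ against the test function and then read off the polar, archimedean and arithmetic contributions. I would first set $F(z):=h\!\left(\frac{z-1/2}{i}\right)$ and record three facts: since $\im\!\left(\frac{z-1/2}{i}\right)=-\re(z-\tfrac12)$, the function $F$ is analytic in the vertical strip $-\varepsilon<\re z<1+\varepsilon$ and satisfies $F(z)\ll|z|^{-(1+\delta)}$ there; since $h$ is even, $F(1-z)=F(z)$; and $F(\rho)=h\!\left(\frac{\rho-1/2}{i}\right)$ for each non-trivial zero $\rho$, with these values absolutely summable by the Riemann--von Mangoldt bound $N(T+1)-N(T)\ll\log T$. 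Next I would recall the completed zeta-function $\xi(s)=\tfrac12 s(s-1)\pi^{-s/2}\Gamma(s/2)\zeta(s)$, which is entire of order one, satisfies $\xi(s)=\xi(1-s)$, has the non-trivial zeros of $\zeta$ as its only zeros, and whose logarithmic derivative is
\[
\frac{\xi'}{\xi}(s)=\frac1s+\frac1{s-1}-\tfrac12\log\pi+\tfrac12\frac{\Gamma'}{\Gamma}\!\left(\tfrac s2\right)+\frac{\zeta'}{\zeta}(s).
\]

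Then I would fix $1<c<1+\varepsilon$ and apply the residue theorem to $\frac{\xi'}{\xi}(z)F(z)$ on the rectangle with vertical sides $\re z=c$, $\re z=1-c$ and horizontal sides $\im z=\pm T$, letting $T\to\infty$ through heights with $\min_\gamma|T-\gamma|\gg1/\log T$. The enclosed poles are exactly the $\rho$ with $|\gamma|<T$ (with multiplicity), and the horizontal pieces vanish in the limit by the classical bound $\frac{\zeta'}{\zeta}(s)\ll\log^2 T$ for such heights $T$ and $-1\le\re s\le 2$, combined with $F(z)\ll|z|^{-(1+\delta)}$ and Stirling for the $\Gamma$-term. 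This expresses $\sum_\rho F(\rho)$ as the difference of the integrals over $\re z=c$ and $\re z=1-c$; substituting $z\mapsto 1-z$ in the latter and using $\frac{\xi'}{\xi}(1-z)=-\frac{\xi'}{\xi}(z)$ together with $F(1-z)=F(z)$, the two integrals combine into
\[
\sum_\rho h\!\left(\frac{\rho-\tfrac12}{i}\right)=\frac{1}{\pi i}\int_{(c)}\frac{\xi'}{\xi}(z)F(z)\,\dz.
\]

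Finally I would substitute the formula for $\frac{\xi'}{\xi}$ and, on $\re z=c>1$, the Dirichlet series $\frac{\zeta'}{\zeta}(z)=-\sum_{n\ge2}\Lambda(n)n^{-z}$ (interchanging sum and integral by absolute convergence), and evaluate the four pieces by shifting each contour to the critical line, on which $z=\tfrac12+iu$ makes $F(z)=h(u)$ and $\dz=i\,\du$. Shifting $\frac1z+\frac1{z-1}$ past the simple pole at $z=1$ contributes the residue $F(1)=h(i/2)$, while the leftover integrand on $\re z=\tfrac12$ equals $\frac{2u}{1/4+u^2}h(u)$, which is odd and integrates to $0$, for a total of $2h(i/2)$; the $-\tfrac12\log\pi$ piece gives $-\frac{\log\pi}{2\pi}\widehat h(0)$ since $\int_{(1/2)}F(z)\,\dz=i\,\widehat h(0)$; the $\tfrac12\frac{\Gamma'}{\Gamma}(z/2)$ piece gives $\frac{1}{2\pi}\int_{-\infty}^{\infty}h(u)\,\frac{\Gamma'}{\Gamma}\!\left(\frac{1+2iu}{4}\right)\du$, which equals its real-part version because $h$ is even and $\overline{\frac{\Gamma'}{\Gamma}\!\left(\frac{1+2iu}{4}\right)}=\frac{\Gamma'}{\Gamma}\!\left(\frac{1-2iu}{4}\right)$; and the prime sum gives $-\frac1\pi\sum_{n\ge2}\frac{\Lambda(n)}{\sqrt n}\widehat h\!\left(\frac{\log n}{2\pi}\right)$, using $\int_{(1/2)}F(z)n^{-z}\,\dz=\frac{i}{\sqrt n}\widehat h\!\left(\frac{\log n}{2\pi}\right)$. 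Adding the four contributions yields the asserted identity. The main obstacle is the analytic bookkeeping rather than any new idea: choosing the heights $T$ via the spacing of the ordinates and invoking the $\log^2 T$ bound to kill the horizontal segments, and justifying the several contour shifts (each controlled by the polynomial decay of $F$ and Stirling) and the term-by-term treatment of the prime sum.
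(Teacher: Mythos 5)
Your argument is correct and is the standard derivation of the explicit formula: contour integration of $\frac{\xi'}{\xi}(z)F(z)$ over a rectangle, folding the two vertical lines via the functional equation and the evenness of $h$, and then shifting each piece of $\frac{\xi'}{\xi}$ to the critical line. The paper itself gives no proof here --- it quotes the statement from \cite[Lemma 8]{CChiM} --- and the proof in that reference (and in standard sources) proceeds exactly along the lines you describe, so there is nothing to flag beyond the routine analytic bookkeeping you already acknowledge.
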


\subsection{Bandlimited approximations} 

\begin{lemma}[Minorant]\label{lemma:min} For $a,\Delta>0$ let
	\begin{equation}\label{Explicit_expression_min}
	L_{a,\Delta}(z) = \dfrac{z^2-a^2 - (Az^2+Ba^2)\sin^2(\pi\Delta z)}{(z^2+a^2)^2}
	\end{equation}
where
	\begin{align*}
	A = \dfrac{2\la \coth(\la)-1}{\sinh^2(\la)}, \quad B = \dfrac{2\la\coth(\la)+1}{\sinh^2(\la)},
	\end{align*}
and $\la=\pi a \Delta$. Then:
	\begin{enumerate}
	\item The inequality 
	$$L_{a,\Delta}(x) \leq f_{a}(x)$$
holds for all real $x$, $L_{a,\Delta}\in L^1(\R)$ and its Fourier transform is supported in $[-\Delta,\Delta]$ (i.e. $L_{a,\D}$ is of exponential type at most $2\pi \Delta$);
	
		\smallskip
		
		\item We have
		\begin{equation}\label{Poisson_intL}
	\ft  L_{a,\Delta}(0) = \frac{1}{\Delta}\sum_{n \in \Z}f_a(n/\Delta) = -\frac{\pi^2 \Delta}{\sinh^2(\pi a \Delta)},
	\end{equation}
	and any other function $F\neq L_{a,\Delta}$ having the same properties as $L_{a,\Delta}$ in item $(1)$ has integral strictly less than the integral of $L_{a,\Delta}$.
\end{enumerate}	
\end{lemma}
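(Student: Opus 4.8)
The plan is to construct $L_{a,\Delta}$ as the unique optimal minorant of $f_a$ of exponential type $2\pi\Delta$ by exploiting the fact that $f_a$ can be written as $f_a(x) = \re\big(\tfrac{1}{(x+ia)^2}\big)$, which is (up to the rational factor $(x^2+a^2)^{-2}$) a rational function with a double pole, and then verifying the stated properties directly. Concretely, I would first record the partial-fraction / interpolation identity: write $f_a(x) = \frac{x^2-a^2}{(x^2+a^2)^2}$ and observe that $f_a = -\tfrac{d}{da}\big(\tfrac{a}{x^2+a^2}\big)/\,(\text{const})$, or more usefully that $f_a(x)(x^2+a^2)^2 = x^2-a^2$ is a polynomial. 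The candidate minorant has the form $L_{a,\Delta}(z) = \frac{P(z) - Q(z)\sin^2(\pi\Delta z)}{(z^2+a^2)^2}$ with $P(z)=z^2-a^2$ and $Q(z)=Az^2+Ba^2$; for $L_{a,\Delta}$ to be entire we need the numerator to vanish to order two at $z=\pm ia$, which forces two linear conditions determining $A$ and $B$. I would carry out this computation: at $z=ia$ we need $P(ia) = Q(ia)\sin^2(\pi i a\Delta)$ and $P'(ia) = \big(Q(z)\sin^2(\pi\Delta z)\big)'\big|_{z=ia}$; using $\sin^2(\pi i a\Delta) = -\sinh^2(\la)$ with $\la=\pi a\Delta$ and $\tfrac{d}{dz}\sin^2(\pi\Delta z)\big|_{z=ia} = \pi\Delta \sin(2\pi i a\Delta) = i\pi\Delta\sinh(2\la)$, the two equations become a $2\times2$ linear system in $(A,B)$ whose solution is exactly the stated $A = \frac{2\la\coth\la - 1}{\sinh^2\la}$, $B = \frac{2\la\coth\la+1}{\sinh^2\la}$.

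For item (1), once $L_{a,\Delta}$ is entire, the estimate $L_{a,\Delta}(x)\le f_a(x)$ for real $x$ is equivalent to $Q(x)\sin^2(\pi\Delta x)\ge 0$ for all real $x$, i.e. to $A\ge0$ and $B\ge0$. Since $B\ge A$ always, it suffices to check $A\ge 0$, i.e. $2\la\coth\la\ge 1$ for all $\la>0$; this holds because $\la\coth\la\ge 1$ (indeed $\coth\la > 1/\la$). The membership $L_{a,\Delta}\in L^1(\R)$ follows from the $(z^2+a^2)^{-2}$ decay together with the fact that the $\sin^2$ term is bounded, combined with entireness; and the exponential-type claim follows because $L_{a,\Delta}(z) = \frac{1}{(z^2+a^2)^2}\big(P(z) - Q(z)\cdot\frac{1-\cos(2\pi\Delta z)}{2}\big)$ is entire of exponential type $2\pi\Delta$ (the rational prefactor is cancelled by the vanishing at $\pm ia$), hence by Paley–Wiener $\widehat{L_{a,\Delta}}$ is supported in $[-\Delta,\Delta]$.

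For item (2), the value $\widehat{L_{a,\Delta}}(0) = \int_\R L_{a,\Delta}$ I would compute by a contour/residue argument or, more slickly, by the observation that for an $L^1$ entire function of exponential type $\le 2\pi\Delta$ that agrees with $f_a$ at all points of $\tfrac1\Delta\Z$ (which $L_{a,\Delta}$ does, since $\sin^2(\pi\Delta\cdot n/\Delta)=0$), the Poisson summation formula gives $\int_\R L_{a,\Delta} = \tfrac1\Delta\sum_{n\in\Z} \widehat{L_{a,\Delta}}(n\Delta)$; but the support condition kills all terms except possibly $n=0$ and the endpoints, and a direct check (or an $L^1$-approximation argument to handle the boundary of the support) reduces this to $\tfrac1\Delta\sum_{n\in\Z} L_{a,\Delta}(n/\Delta) = \tfrac1\Delta\sum_{n\in\Z} f_a(n/\Delta)$. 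The closed form $\sum_{n\in\Z} f_a(n/\Delta) = -\frac{\pi^2\Delta^2}{\sinh^2(\pi a\Delta)}$ then follows from differentiating the classical identity $\sum_{n\in\Z}\frac{1}{n^2+b^2} = \frac{\pi}{b}\coth(\pi b)$ with respect to $b$ (with $b=a\Delta$, after rescaling), since $f_a(n/\Delta) = -\tfrac{1}{2}\tfrac{d}{da}\big(\tfrac{a}{(n/\Delta)^2+a^2}\big)$ up to bookkeeping — I would just push the derivative through and simplify. Finally, strict optimality among all type-$2\pi\Delta$ $L^1$-minorants: since any such minorant $F$ satisfies $F(n/\Delta)\le f_a(n/\Delta) = L_{a,\Delta}(n/\Delta)$ at the interpolation nodes, Poisson summation again gives $\int_\R F = \tfrac1\Delta\sum_n F(n/\Delta) \le \tfrac1\Delta\sum_n f_a(n/\Delta) = \int_\R L_{a,\Delta}$, with equality forcing $F(n/\Delta) = f_a(n/\Delta)$ for every $n$; a standard interpolation-uniqueness argument in the relevant de Branges / Paley–Wiener space (the type-$2\pi\Delta$ functions interpolating prescribed values on $\tfrac1\Delta\Z$ with $L^1$/$L^2$ control form an affine space whose "homogeneous part" is spanned by $\sin^2(\pi\Delta z)/(z^2+a^2)^2$ type elements) then pins down $F = L_{a,\Delta}$.

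The main obstacle I anticipate is making the Poisson-summation step fully rigorous at the endpoints $\pm\Delta$ of the Fourier support and, relatedly, the strict-optimality/uniqueness claim: one must argue that the only $L^1$ entire function of exponential type $\le 2\pi\Delta$ vanishing on all of $\tfrac1\Delta\Z$ and decaying like $(x^2+a^2)^{-2}$ is a multiple of $\frac{\sin^2(\pi\Delta z)}{(z^2+a^2)^2}$, and that subtracting it cannot increase the integral — this is exactly where "some basic results in the theory of de Branges spaces" (as the authors flag in the introduction) enter, since the clean way to see the extremality is to interpret $\int_\R F\,dx$ as a reproducing-kernel evaluation in the de Branges space with weight $(z^2+a^2)^{-2}$ and use that the interpolation at $\tfrac1\Delta\Z$ is a complete interpolating sequence there. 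Everything else is bounded computation with hyperbolic functions.
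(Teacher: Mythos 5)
Your proposal follows essentially the same route as the paper: determine $A,B$ by forcing the numerator to vanish doubly at $z=\pm ia$, get the minorant property from $A,B\ge 0$ and the Fourier support from Paley--Wiener, and obtain both the value of the integral and the extremality from Poisson summation over $\tfrac1\Delta\Z$ (your evaluation of $\sum_n f_a(n/\Delta)$ by differentiating the cotangent identity is an equivalent alternative to the paper's use of $\widehat{f_a}$). The one step worth tightening is uniqueness: first-order agreement $F(n/\Delta)=f_a(n/\Delta)$ alone does not pin down a type-$2\pi\Delta$ function at the lattice $\tfrac1\Delta\Z$, so you must also extract $F'(n/\Delta)=f_a'(n/\Delta)$ from the one-sided inequality (each node is a local maximum of $F-f_a$) before invoking second-order interpolation uniqueness, for which the paper cites \cite[Theorem 9]{V} --- no de Branges theory is needed in the minorant case.
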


\begin{proof}
Note first that the constants $A,B$ were chosen so the numerator of $L_{a,\Delta}$ vanishes doubly at $z=\pm i a$. We then see that $L_{a,\Delta}$ is entire, of exponential type at most $2\pi \Delta$ and belongs to $L^1(\R)$. Therefore, the Paley-Wiener Theorem guarantees its Fourier transform is supported in $[-\D,\D]$. Since $B>A>0$ we have $L_{a,\Delta}(x)\leq f_a(x)$ for all real $x$. This proves item $(1)$. We now prove item $(2)$. Suppose $F$ is an $L^1(\R)$-function, $F(x)\leq f_a(x)$ for all real $x$ and $\ft F$ is supported in $[-\D,\D]$. Poisson summation implies
$$
\ft F(0) = \frac{1}{\D} \sum_{n\in\Z} F(n/\D) \leq  \frac{1}{\D} \sum_{n\in\Z} f_a(n/\D) = \frac{1}{\D} \sum_{n\in\Z} L_a(n/\D)=\ft L_{a,\D}(0),
$$
where the last identity is due to the fact that $L_{a,\Delta}$ interpolates (in second order) $f_a$ in $\tfrac{1}{\D}\Z$. Equality is attained if and only if $F(x)=L_{a,\D}(x)$ in second order for all $x\in \tfrac{1}{\D}\Z$. However, this completely characterizes $F=L_{a,\D}$ (see \cite[Theorem 9]{V}). Finally, using that $\ft f_a(y)=-2\pi^2|y|e^{-2\pi a |y|}$, identity \eqref{Poisson_intL} can easily be derived using Poisson summation over $\tfrac{1}{\D}\Z$.
\end{proof}

It turns out that because $f_a(x)$ has a local maximum at $x=\sqrt{3}\,a$, the bandlimited majorant of $f_a$ with minimal total mass will have to be adjusted when $\pi a \Delta$ is  small. This adjustment will require some de Branges spaces theory.

\begin{lemma}[Majorant]\label{lemma:maj}
For $a,\Delta>0$ let
\begin{equation}\label{Explicit_expression_maj}
U_{a,\Delta}(z)= \dfrac{z^2-a^2 + (Cz^2+Da^2)(\cos(\pi\Delta z)- E \pi \Delta z \sin(\pi \Delta z))^2}{(z^2+a^2)^2},
\end{equation}
where
\begin{align*}
(C,D,E) =\begin{cases} \quad \quad \ \ \bigg(\dfrac{2\la \tanh(\la)-1}{\cosh^2(\la)}, \dfrac{2\la\tanh(\la)+1}{\cosh^2(\la)}, 0 \bigg) & \  \text{if } \la \geq \la_0,  \vspace{3mm} \\ \bigg( 0,\dfrac{1}{2} \left(\dfrac{ 2\la +\tanh(\la)}{\sinh(\la)+\la \, {\rm sech}(\la)}\right)^2 , \dfrac{1-2\la \tanh(\la)}{2\la^2+ \la \tanh(\la)}\bigg) & \ \text{if } \la < \la_0, \end{cases}
\end{align*}
$\la=\pi a \Delta$ and $\la_0=0.771\ldots$ is such that $2\lambda_0 \tanh(\lambda_0)=1$. Then:
	\begin{enumerate}
	\item The inequality 
	$$f_{a}(x)\leq U_{a,\Delta}(x)$$
holds for all real $x$, $U_{a,\Delta}\in L^1(\R)$ and its Fourier transform is supported in $[-\Delta,\Delta]$ (i.e. $U_{a,\D}$ is of exponential type at most $2\pi \Delta$);
	
		\smallskip
		
		\item We have
		\begin{equation}\label{Poisson_int2case1}
	 \ft U_{a,\Delta}(0) = \begin{cases}\quad \quad \quad \quad \quad \quad \quad \quad \quad \quad \dfrac{\pi^2 \D}{\cosh^2(\la )} &  \text{if } \la \geq \la_0,  \vspace{2mm} \\  \dfrac{\pi^2 \D}{\sinh^2(\la)}\left( \dfrac{2\la+\sinh(2\la)}{8\la}\left(\dfrac{ 2\la +\tanh(\la)}{\sinh(\la)+\la \, {\rm sech}(\la)}\right)^2-1\right) &  \text{if } \la < \la_0. \end{cases}
	\end{equation}
Moreover, any other function $F\neq U_{a,\Delta}$ having the same properties as $U_{a,\Delta}$ in item $(1)$ has integral strictly greater than the integral of $U_{a,\Delta}$.
	\end{enumerate}
\end{lemma}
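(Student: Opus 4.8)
The plan is to follow the template of Lemma~\ref{lemma:min}, the genuinely new feature being that for small $\lambda=\pi a\Delta$ the naive cosine‑majorant becomes inadmissible and must be replaced by the $E$‑perturbed expression, whose optimality rests on de Branges space theory. For item~(1) one first checks that the triple $(C,D,E)$ is forced by the requirement that the numerator $N(z)$ of $U_{a,\Delta}$ vanish to second order at $z=\pm ia$: substituting $z=ia$ and using $\cos(i\lambda)=\cosh\lambda$, $\sin(i\lambda)=i\sinh\lambda$, the conditions $N(ia)=N'(ia)=0$ reduce, when $\lambda\ge\lambda_0$ (so $E=0$), to two linear equations for $(C,D)$ with the stated solution, and when $\lambda<\lambda_0$ (so $C=0$) to two equations for $(D,E)$ whose solution is the stated one; moreover $C\ge0\iff 2\lambda\tanh\lambda\ge1\iff\lambda\ge\lambda_0$ and $E\ge0\iff\lambda\le\lambda_0$, and the two branches match at $\lambda=\lambda_0$. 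Since $N$ is even this gives the double zero at $-ia$ as well, so $U_{a,\Delta}$ is entire, of exponential type $\le 2\pi\Delta$ (the factors $\cos(\pi\Delta z)$, $\sin(\pi\Delta z)$, $z\sin(\pi\Delta z)$ are, and dividing by a polynomial preserves this); crucially $U_{a,\Delta}(x)=O(x^{-2})$ on $\R$, which forces the $x^4\sin^2(\pi\Delta x)$‑term in $N$ to be absent --- exactly why one must take $C=0$ once $\lambda<\lambda_0$. Hence $U_{a,\Delta}\in L^1(\R)$, Paley--Wiener gives $\supp\widehat U_{a,\Delta}\subset[-\Delta,\Delta]$, and
\[
U_{a,\Delta}(x)-f_a(x)=\frac{(Cx^2+Da^2)\bigl(\cos(\pi\Delta x)-E\pi\Delta x\sin(\pi\Delta x)\bigr)^2}{(x^2+a^2)^2}\ \ge\ 0
\]
because $D,E\ge0$ always and $C\ge0$ precisely when $\lambda\ge\lambda_0$; the failure of this sign in the unperturbed construction for $\lambda<\lambda_0$ is what forces the case split.

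For item~(2), write $\psi(z)=\cos(\pi\Delta z)-E\pi\Delta z\sin(\pi\Delta z)$, so that $U_{a,\Delta}-f_a$ vanishes to second order exactly on the real zero set of $\psi$. When $\lambda\ge\lambda_0$ this set is the lattice $\tfrac1\Delta(\Z+\tfrac12)$ and the argument is as elementary as for the minorant: any admissible $F$ lies in $L^1$ with $\widehat F$ continuous, hence $\widehat F(\pm\Delta)=0$, so Poisson summation over that lattice carries no boundary terms and
\[
\widehat F(0)=\tfrac1\Delta\sum_{n\in\Z}F\bigl(\tfrac{n+1/2}{\Delta}\bigr)\ \ge\ \tfrac1\Delta\sum_{n\in\Z}f_a\bigl(\tfrac{n+1/2}{\Delta}\bigr)=\tfrac1\Delta\sum_{n\in\Z}U_{a,\Delta}\bigl(\tfrac{n+1/2}{\Delta}\bigr)=\widehat U_{a,\Delta}(0),
\]
where the last sum, via $f_a=\partial_a\bigl(a/(x^2+a^2)\bigr)$ and $\sum_n\bigl((n+\tfrac12)^2+c^2\bigr)^{-1}=\tfrac{\pi}{c}\tanh(\pi c)$, equals $\pi^2\Delta\,{\rm sech}^2\lambda$, the claimed value; equality above forces $F=f_a$ on the nodes, hence (as $F-f_a\ge0$ is minimized there) $F'=f_a'$ on the nodes, so $F-U_{a,\Delta}$ has double zeros on $\tfrac1\Delta(\Z+\tfrac12)$ and the interpolation uniqueness (cf. \cite[Theorem~9]{V}) gives $F=U_{a,\Delta}$.

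For $\lambda<\lambda_0$ the roles of $\cos$ and of Poisson summation are taken over by $\psi$ and by a de Branges space identity. The key observation is that every admissible $F$ satisfies $F(x)(x^2+a^2)^2-(x^2-a^2)\ge0$ on $\R$, so by Fej\'er--Riesz/Krein factorization it equals $|q_F(x)|^2$ for an entire $q_F$ of exponential type $\le\pi\Delta$ with $\int_\R|q_F(x)|^2(x^2+a^2)^{-2}\,\dx<\infty$, i.e. $q_F\in\mathcal H(E)$ with $E(z)=(z+ia)^2e^{-i\pi\Delta z}$ (so $|E(x)|^2=(x^2+a^2)^2$); and since $F$ is entire the factor $(z^2+a^2)^2$ automatically produces a double zero at each of $\pm ia$, which translates into the Hermite conditions $q_F(ia)q_F^*(ia)=2a^2$ and $(q_Fq_F^*)'(ia)=-2ia$. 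Because $\int_\R(x^2-a^2)(x^2+a^2)^{-2}\,\dx=0$, one gets $\widehat F(0)=\|q_F\|_{\mathcal H(E)}^2$, and the claim reduces to: among $q\in\mathcal H(E)$ subject to those Hermite conditions at $\pm ia$, the norm is minimized, uniquely, by $q=\sqrt D\,a\,\psi$. Using the explicit reproducing kernel of $\mathcal H(E)$ --- which one computes to be supported, together with its first derivative at $ia$ and their conjugates at $-ia$, on the four‑dimensional span $\{e^{\mp i\pi\Delta z},\,z\,e^{\mp i\pi\Delta z}\}$ --- the constrained minimizer lies in that span; a Lagrange‑multiplier computation on the (quadratic) constraints identifies it with $\sqrt D\,a\,\psi$, and $\widehat U_{a,\Delta}(0)=\|q_U\|^2=\int_\R Da^2\psi(x)^2(x^2+a^2)^{-2}\,\dx$ is then evaluated in closed form to the stated expression (which at $\lambda=\lambda_0$ specializes to $\pi^2\Delta\,{\rm sech}^2\lambda$); equality forces $q_F=q_U$ up to a unimodular constant, whence $F=U_{a,\Delta}$. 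The same framework covers $\lambda\ge\lambda_0$ with $q_U=(\sqrt C\,z+i\sqrt D\,a)\cos(\pi\Delta z)$, the case split reflecting that the minimizer has $C>0$ in the interior regime and $C=0$ on the boundary.

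The main obstacle is entirely in the $\lambda<\lambda_0$ regime, and specifically in two de Branges inputs: (i) that $\cos w-Ew\sin w$ has only real, simple zeros --- equivalently, that the relevant structure function is Hermite--Biehler, a Laguerre--P\'olya‑type statement for this explicit transcendental function --- and (ii) that $\sqrt D\,a\,\psi$ is the constrained norm‑minimizer in $\mathcal H(E)$, which needs the explicit reproducing kernel and the finite‑dimensional optimization on the interpolation constraints at $\pm ia$. Everything downstream --- the double‑zero bookkeeping, the $L^1$ decay, the closed‑form value of $\widehat U_{a,\Delta}(0)$, and the strictness of the inequality for $F\ne U_{a,\Delta}$ --- is then routine.
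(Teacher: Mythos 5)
Your item (1) and the $\la\ge\la_0$ half of item (2) coincide with the paper's argument: the constants are forced by the double vanishing of the numerator at $z=\pm ia$, Paley--Wiener gives the support of $\ft U_{a,\D}$, $C,D\ge0$ gives the majorization, and for $\la\ge\la_0$ classical Poisson summation over $\tfrac1\D(\tfrac12+\Z)$ together with the interpolation/uniqueness argument settles extremality and the value $\pi^2\D\sech^2(\la)$.

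The gap is in the $\la<\la_0$ regime, which is the genuinely new content of the lemma. You replace the quadrature argument by a Krein factorization $F-f_a=|q_F|^2/(x^2+a^2)^2$ and reduce extremality to minimizing $\|q\|^2_{\H(\E)}$ over $q$ of type $\pi\D$ subject to the bilinear Hermite constraints $q(ia)q^*(ia)=2a^2$, $(qq^*)'(ia)=-2ia$; but the identification of $\sqrt{D}\,a\,\psi$ as the unique constrained minimizer --- the entire point of the case $\la<\la_0$ --- is asserted rather than proved, and you yourself flag it as an open "obstacle" together with the reality and simplicity of the zeros of $\psi$. A Lagrange-multiplier computation on non-convex quadratic constraints in the four-dimensional span of the kernels is not routine, and nothing in the sketch explains why the minimizer should have $C=0$ exactly when $2\la\tanh\la<1$. (There are also smaller unverified points: that $(F-f_a)(x^2+a^2)^2$ lies in the class admitting a Krein factorization with $q$ of type $\pi\D$, and the closed-form evaluation of $\ft U_{a,\D}(0)$, which the paper obtains by Poisson summation over $\tfrac1\D\Z$ using the explicit transforms of $f_a$ and $a^2/(x^2+a^2)^2$.) The paper sidesteps the minimization entirely: it applies Littmann's generalized Poisson summation (quadrature) formula at the zeros of $\B(z)=\cos(\pi z)-E\pi z\sin(\pi z)$, whose weights $1-\pi E/(\pi(\pi^2E^2t^2+1)+\pi E)$ are manifestly positive, so $F\ge f_a=U_{a,\D}$ at the nodes immediately yields $\ft F(0)\ge\ft U_{a,\D}(0)$; de Branges theory (via the Hermite--Biehler function $(i+\pi Ez)e^{-\pi iz}$ and the interpolation theorem of Gon\c{c}alves--Littmann) enters only to prove uniqueness of the equality case. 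If you want to keep your variational route you must actually carry out the finite-dimensional optimization; otherwise the quadrature-formula route closes the gap with far less work.
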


\begin{proof}
Note that the constants $(C,D,E)$ are chosen so that $U_{a,\Delta}$ is entire, that is, its numerator vanishes doubly at $z=\pm i a$. Since $U_{a,\Delta}$ is visibly of exponential type at most $2\pi \Delta$ and belongs to $L^1(\R)$, the Paley-Wiener Theorem guarantees its Fourier transform is supported in $[-\D,\D]$. Noting that $C,D \geq 0$ we have $f_a(x)\leq U_{a,\D}(x)$ for all real $x$, and this proves item $(1)$. We now show item $(2)$. Suppose $F$ is an $L^1(\R)$-function, $F(x)\geq f_a(x)$ for all real $x$ and $\ft F$ is supported in $[-\D,\D]$. We now apply the generalized Poisson summation formula of Littmann for bandlimited functions \cite[Theorem 2.1]{Litt}  for $\ga=(\pi E)^{-1}$ with $E>0$. It translates to
\begin{align*}
\ft F(0) & = \frac{1}{\Delta}\sum_{\B(t)=0} \biggl( 1-\frac{\pi E}{\pi(\pi^2 E^2t^2+1)+\pi E} \biggr)F(t/\Delta) \\ & \geq \frac{1}{\Delta}\sum_{\B(t)=0} \biggl( 1-\frac{\pi E}{\pi(\pi^2 E^2t^2+1)+\pi E} \biggr)f_a(t/\Delta) \\ & = \frac{1}{\Delta}\sum_{\B(t)=0} \biggl( 1-\frac{\pi E}{\pi(\pi^2 E^2t^2+1)+\pi E} \biggr)U_{a,\D}(t/\Delta) \\ & = \ft U_{a,\D}(0),
\end{align*}
where $\B(z)=\cos(\pi z)- E \pi  z \sin(\pi z)$. Note when $E=0$, that is, $\la\geq \la_0$, this is the classical Poisson summation over $\tfrac1{\D}(\tfrac12+\Z)$. Equality is attained if and only if $F(t/\D)=U_{a,\D}(t/\D)$ in second order for all real $t$ with $\B(t)=0$. We claim  this completely characterizes $F=U_{a,\D}$. The trick is to use the theory of de Branges spaces and the interpolation formula \cite[Theorem A]{GL} (the introduction of \cite{GL} gives a solid short background on the necessary de Branges spaces theory which we will use here without much explanation). First we note that the function $\E(z)=(i+\pi E z) e^{-\pi i z}$ is of Hermite-Biehler class (i.e. $|\E(\ov z)|<|\E(z)|$ for all $z$ with $\im z>0$) and therefore the de Branges space $\H(\E^2)$ exists, and it consists of all entire functions of exponential type at most $2\pi$ belonging to $L^2(\R,\dx/(1+E^2 \pi ^2 x^2))$. Note also that $\B(z)=i(\ov{\E(\ov z)}-\E(z))/2$. Moreover, it is not hard to show that all conditions of \cite[Theorem A]{GL} are satisfied by $\E(z)$, and thus we conclude that any function $G\in \H(\E^2)$ is completely characterized by its values $G(t)$ and $G'(t)$ for all real $t$ with $\B(t)=0$. Now it is simply a matter to note that $(i+\pi E z)^2F(z/\D)$ and $(i+\pi E z)^2 U_{a,\D}(z/\D)$ both belong to  $\H(\E^2)$, and so they must be equal\footnote{Note when $E=0$ this argument reduces to classical Paley-Wiener space theory and Poisson summation.}.

Finally, in the case $\la\geq \la_0$ one can use Poisson summation over  $\tfrac1{\D}(\tfrac12+\Z)$ to evaluate the integral of $U_{a,\D}$ and obtain
$$
\ft U_{a,\D}(0) = \frac{\pi^2 \Delta}{\cosh^2(\pi a \Delta)}.
$$
If $\la< \la_0$ then we can use Poisson summation over $\tfrac{1}{\D}\Z$ to obtain
\begin{align*}
\ft U_{a,\D}(0)  & = \frac{1}{\D}\sum_{n\in \Z} \left( f_a(n/\D) + \frac{Da^2}{(n^2/\D^2+a^2)^2}\right)  \\ 
& = -2\pi^2 \D \sum_{n\in \Z} |n|e^{-2\la |n|}  + D\pi^2\Delta \sum_{n\in\Z}  (|n|+\tfrac{1}{2\la})e^{-2\la |n|} \\ &  =  -\frac{\pi^2 \Delta}{\sinh^2(\la)} + D\pi^2\Delta  \frac{2\la+\sinh(2\la)}{4\la \sinh^2(\la)} \\
& = \dfrac{\pi^2 \D}{\sinh^2(\la)}\left( \frac{2\la+\sinh(2\la)}{8\la}\left(\dfrac{ 2\la +\tanh(\la)}{\sinh(\la)+\la \, {\rm sech}(\la)}\right)^2-1\right).
\end{align*}
Above we used that $\ft f_a(y)=-2\pi^2|y|e^{-2\pi a |y|}$ and the Fourier transform of $\tfrac{a^2}{(x^2+a^2)^2}$ is
${\pi^2}\left(|y|+\frac{1}{2\pi a}\right)e^{-2\pi a |y|}$.
\end{proof}

\begin{lemma}\label{lemma:majminbounds}
The functions defined in Lemmas \ref{lemma:min} and \ref{lemma:maj} satisfy the following inequalities  for $-\Delta < y < \Delta$:
$$
\ft L_{a,\Delta}(y)  < 0
$$
and, if $\pi a \D\geq \la_0$,
$$
\ft U_{a,\Delta}(y) > \ft f_a(y).
$$

\end{lemma}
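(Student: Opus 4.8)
The plan is to obtain explicit closed‑form expressions for $\ft L_{a,\D}(y)$ and for $\ft U_{a,\D}(y)-\ft f_a(y)$ on $(-\D,\D)$ and then simply read off their signs. Since every function involved is even, it suffices to work with $0\le y\le\D$, and throughout I put $u:=2\pi a y\in[0,2\la]$, so that $2\la=2\pi a\D$.

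\medskip

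\noindent\emph{The minorant.} Write $G(x):=\dfrac{Ax^2+Ba^2}{(x^2+a^2)^2}$, so $L_{a,\D}(x)=f_a(x)-G(x)\sin^2(\pi\D x)$. As $B-A=2/\sinh^2\la$, partial fractions give $G=\dfrac{A}{x^2+a^2}+\dfrac{2}{\sinh^2\la}\cdot\dfrac{a^2}{(x^2+a^2)^2}$. Recalling that $\tfrac1{x^2+a^2}$ has Fourier transform $\tfrac\pi a\,e^{-2\pi a|y|}$, that $\tfrac{a^2}{(x^2+a^2)^2}$ has Fourier transform $\pi^2\big(|y|+\tfrac1{2\pi a}\big)e^{-2\pi a|y|}$ (both already used in the proof of Lemma~\ref{lemma:maj}), and that $A+\tfrac1{\sinh^2\la}=\tfrac{2\la\coth\la}{\sinh^2\la}$, one gets
\[
\ft G(y)=\frac{2\pi^2}{\sinh^2\la}\big(\D\coth\la+|y|\big)e^{-2\pi a|y|}.
\]
Using $\sin^2(\pi\D x)=\tfrac12-\tfrac14e^{2\pi i\D x}-\tfrac14e^{-2\pi i\D x}$ and the modulation rule, $\ft L_{a,\D}(y)=\ft f_a(y)-\tfrac12\ft G(y)+\tfrac14\ft G(y-\D)+\tfrac14\ft G(y+\D)$. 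Plugging in $\ft f_a(y)=-2\pi^2 y e^{-2\pi a y}$ and the formula for $\ft G$, and simplifying with $\cosh2\la=1+2\sinh^2\la$ and $\sinh2\la=2\sinh\la\cosh\la$, everything collapses — after the substitution $w:=2\pi a(\D-y)=2\la-u$ — to
\[
\ft L_{a,\D}(y)=\frac{\pi}{2a\sinh^2\la}\big(w\cosh w-2\la\coth\la\,\sinh w\big)
\]
(a check: $y=\D$ gives $w=0$, so $\ft L_{a,\D}(\D)=0$; $y=0$ gives $w=2\la$ and recovers the value $-\pi^2\D/\sinh^2\la$ of \eqref{Poisson_intL}). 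It then remains to observe that for $w\in(0,2\la]$,
\[
w\cosh w-2\la\coth\la\,\sinh w=\sinh(w)\big(w\coth w-2\la\coth\la\big)<0,
\]
because $t\mapsto t\coth t$ is strictly increasing on $(0,\infty)$ (its derivative is $(\sinh2t-2t)/(2\sinh^2 t)>0$) and $t\mapsto\coth t$ is strictly decreasing, whence $w\coth w\le2\la\coth(2\la)<2\la\coth\la$. Thus $\ft L_{a,\D}(y)<0$ for all $|y|<\D$.

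\medskip

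\noindent\emph{The majorant.} Now assume $\la=\pi a\D\ge\la_0$, so $E=0$ in \eqref{Explicit_expression_maj} and $U_{a,\D}(x)=f_a(x)+\dfrac{Cx^2+Da^2}{(x^2+a^2)^2}\cos^2(\pi\D x)$. By the same partial‑fraction computation (now $D-C=2/\cosh^2\la$ and $C+\tfrac1{\cosh^2\la}=\tfrac{2\la\tanh\la}{\cosh^2\la}$), the Fourier transform of $\dfrac{Cx^2+Da^2}{(x^2+a^2)^2}$ equals $\dfrac{2\pi^2}{\cosh^2\la}\big(\D\tanh\la+|y|\big)e^{-2\pi a|y|}$; combining this with $\cos^2(\pi\D x)=\tfrac12+\tfrac14e^{2\pi i\D x}+\tfrac14e^{-2\pi i\D x}$ and reducing as before gives, for $0\le y\le\D$,
\[
\ft U_{a,\D}(y)-\ft f_a(y)=\frac{\pi}{2a\cosh^2\la}\left(u\big(e^{-u}-e^{-2\la}\sinh u\big)+\frac{2\la}{\cosh\la}\cosh(\la-u)\right)
\]
(at $u=2\la$ the bracket equals $4\la e^{-2\la}\cosh^2\la$, giving $\ft U_{a,\D}(\D)=0$, consistent with the claimed support). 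I then claim this bracket is strictly positive on $[0,2\la]$. A direct expansion of the hyperbolic functions in exponentials gives the identity
\[
4\cosh(\la)\big(e^{-u}-e^{-2\la}\sinh u\big)+4\cosh(\la-u)=4e^{\la-u}+3e^{-\la-u}+e^{-u-3\la}+e^{u-\la}\big(1-e^{-2\la}\big),
\]
whose right‑hand side is $>0$ for $\la>0$; hence $e^{-u}-e^{-2\la}\sinh u>-\cosh(\la-u)/\cosh\la$. Since $0\le u\le2\la$, in either case — $u\big(e^{-u}-e^{-2\la}\sinh u\big)\ge0$ if the factor is nonnegative, and $u\big(e^{-u}-e^{-2\la}\sinh u\big)\ge2\la\big(e^{-u}-e^{-2\la}\sinh u\big)$ if it is negative — we obtain $u\big(e^{-u}-e^{-2\la}\sinh u\big)>-\tfrac{2\la}{\cosh\la}\cosh(\la-u)$, so the bracket is $>0$. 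Therefore $\ft U_{a,\D}(y)>\ft f_a(y)$ for all $|y|<\D$.

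\medskip

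\noindent The only genuine labour here will be the bookkeeping in the two Fourier‑transform/hyperbolic reductions that produce the displayed closed forms; once those are secured the sign assertions are immediate, the first from the monotonicity of $t\coth t$ and of $\coth t$, the second from the single exponential identity above.
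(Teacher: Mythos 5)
Your proof is correct, but it takes a more computational route than the paper's. Your closed forms are right: I checked that
$\ft L_{a,\D}(y)=\frac{\pi}{2a\sinh^2\la}\big(w\cosh w-2\la\coth\la\,\sinh w\big)$ on $[0,\D]$ does reproduce $-\pi^2\D/\sinh^2\la$ at $y=0$ and $0$ at $y=\D$, and that your majorant bracket is correct (the key simplification being the identity $\tanh\la+(\tanh\la+1)e^{-2\la}=1$). Your sign argument for the minorant via $\sinh w\,(w\coth w-2\la\coth\la)<0$, using strict monotonicity of $t\coth t$ and of $\coth t$, is clean and correct.

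The paper proceeds differently. For the minorant it does not compute a closed form at all; instead it multiplies $\ft L_{a,\D}(y)$ by $e^{2\pi a y}$ and observes that the resulting function is \emph{convex} on $(0,\D)$ — the translates $e^{-2\pi a|y|}$ and $e^{-2\pi a|y+\D|}$ turn into affine pieces after multiplication, so only the $|y-\D|$-piece survives the second derivative, and its sign is determined by $B>A>0$. Combined with the (already known) endpoint values $\ft L_{a,\D}(0)<0$ and $\ft L_{a,\D}(\D)=0$, convexity instantly gives negativity on the interval. For the majorant the paper's proof is a one-liner: since $D>C\geq 0$, the quantity $\ft U_{a,\D}(y)-\ft f_a(y)$ is the averaging operator $\tfrac14(2\,\mathrm{Id}+T_\D+T_{-\D})$ applied to a manifestly positive function, hence positive. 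Your proof of the majorant case — expanding the closed form into the bracket $u(e^{-u}-e^{-2\la}\sinh u)+\tfrac{2\la}{\cosh\la}\cosh(\la-u)$ and proving positivity via an exponential identity plus a casework argument — is correct, but it obscures this immediate positivity; you already wrote the difference as a nonnegative combination of translates of a positive kernel, and could have stopped there. In short: same hard input (the Fourier transforms of $1/(x^2+a^2)$ and $a^2/(x^2+a^2)^2$, the translation structure from $\sin^2$/$\cos^2$), but the paper leverages structural observations (convexity; positivity of the averaged kernel) where you push through to explicit formulas and then argue signs by hand. Both buy the same lemma; the paper's is shorter, yours leaves behind closed-form expressions that could be of independent use.

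Note also a small wording slip: at $u=2\la$ your bracket equals $4\la e^{-2\la}\cosh^2\la$, which is the value $-\ft f_a(\D)$ in disguise, so it is $\ft U_{a,\D}(\D)-\ft f_a(\D)=-\ft f_a(\D)$ that you recover, equivalently $\ft U_{a,\D}(\D)=0$; as written "the bracket $\ldots$ giving $\ft U_{a,\D}(\D)=0$" reads as though the bracket itself vanished.
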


\begin{proof}
First we deal with the minorant. Using that $\ft f_a(y)=-2\pi^2|y|e^{-2\pi a |y|}$ and the Fourier transforms of $\tfrac{1}{x^2+a^2}$ and $\tfrac{a^2}{(x^2+a^2)^2}$ are
$$
\frac{\pi}{a} e^{-2\pi a|y|} \quad \text{and} \quad {\pi^2}\biggl(|y|+\frac{1}{2\pi a}\biggr)e^{-2\pi a |y|},
$$
respectively, we obtain
\begin{align*}
 \ft L_{a,\Delta}(y)  =-2\pi^2|y|e^{-2\pi a |y|} - \frac{2\,{\rm Id}-T_\D - T_{-\D}}{4}\bigg[\pi^2\bigg(\frac{B+A}{2\pi a}  + (B-A)|y|\bigg)e^{-2\pi a |y|}\bigg],
 \end{align*}
where $T_{h}$ is the operator of translation by $h$ and ${\rm Id}$ is the identity operator. These operators come from the (distributional) Fourier transform of $\sin^2(\pi \D x)$. We claim that the function $e^{2\pi a y}\ft L_{a,\Delta}(y) $ is convex in the range $0<y<\Delta$, which would show that $\ft L_{a,\Delta}(y)$ is negative in the same range since it is negative at $y=0$ and vanishes at $y=\Delta$.  For $0<y<\D$ we have
\begin{align*}
 \frac{\d^2}{\d y^2} \left[e^{2\pi a y}\ft L_{a,\Delta}(y) \right] 
& =\frac{\d^2}{\d y^2} \left[\frac{\pi^2}4\left(\frac{B+A}{2\pi a} + { (B-A)(\D-y)}\right)e^{2\pi a (2y-\D)} + \text{linear}\right] \\
& =\left( A + { \pi a(B-A)(\D-y)}\right)4a\pi^3  e^{2\pi a (2y-\D)} \\ & > 0,
 \end{align*}
because $B>A>0$. The majorant case is simpler, since if  $\la=\pi a \D \geq  \la_0$ a similar computation leads to
\begin{align*}
\ft U_{a,\Delta}(y)  =-2\pi^2|y|e^{-2\pi a |y|} + \frac{2\,{\rm Id}+T_\D +T_{-\D}}{4}\bigg[\pi^2\bigg(\frac{C+D}{2\pi a}  + (D-C)|y|\bigg)e^{-2\pi a |y|}\bigg],
 \end{align*}
and so the desired inequality follows because $D>C\geq0$.
\end{proof}

\section{Proof of Theorem \ref{maintheorem}} Let $\hh<\sigma<1 $ and $\Delta>0$. Throughout the rest of the paper we set $a=\sigma-\hh$ and $\la=\pi a \D$. Using Lemma \ref{Rep_lem} and the evenness of the zeta-zeros we obtain
\begin{align*}
\re \bigg(\dfrac{\zeta'}{\zeta}\bigg)'(\sigma+it)  = \sum_{\ga} \left(\tfrac12 f_a(\ga-t) + \tfrac12 f_a(\ga+t) +f_a(\ga)\right) + O(1),
\end{align*}
as $t\to \infty$, where we have used that $f_a(x) = O(1/x^2)$ uniformly for $|x| \geq 1 $ and $0< a < 1/2$, hence $\sum_{\ga} f_a(\ga)=O(1)$. We then apply Lemmas \ref{lemma:min} and \ref{lemma:maj} to get
\begin{align}\label{Poisson_before_app_GW}	\displaystyle
& \sum_{\gamma}M_t L_{a,\Delta}(\gamma) +  O(1) \leq  \re\bigg(\dfrac{\zeta'}{\zeta}\bigg)'(\sigma+it)
\leq  \sum_{\gamma}M_t U_{a,\Delta}(\gamma) +  O(1),
\end{align}
where $M_t=\tfrac12 T_{t} +\tfrac12 T_{-t} + {\rm Id}$. Note that for each $t\geq 0$ the functions $M_t L_{a,\Delta}$ and $M_t U_{a,\Delta}$ are even and admissible for the Guinand-Weil explicit formula (Proposition \ref{GW}). We use the operator $M_t$ because its Fourier transform is the operator that multiplies by $2\cos^2(\pi t x)$, which is nonnegative. This will allow us to simply discard (or easily bound) the sum over primes in the explicit formula.

\subsection{Proof of the lower bound}
Applying Proposition \ref{GW} and  Lemmas \ref{lemma:min} and \ref{lemma:majminbounds} we obtain
\begin{align}\label{GW_applied_to_m}
\displaystyle\sum_{\gamma}M_t L_{a,\Delta}(\gamma)  &= \dfrac{1}{2\pi}\int_{-\infty}^{\infty}M_t L_{a,\Delta}(u)\,\re{\dfrac{\Gamma'}{\Gamma}\left(\dfrac{1+2iu}{4}\right)}\,\du + 2M_t L_{a,\Delta}\left(\dfrac{i}{2}\right)\\
& \,\,\quad
- \frac{2}{\pi}\sum_{n\geq 2}\dfrac{\Lambda(n)}{\sqrt{n}} \,\ft L_{a,\Delta}\left(\dfrac{\log n}{2\pi}\right) \cos^2(\tfrac12 t \log n)   -\dfrac{\log\pi }{\pi}\ft  L_{a,\Delta}(0)  \\ &
\geq  \dfrac{1}{2\pi}\int_{-\infty}^{\infty}M_t L_{a,\Delta}(u)\,\re{\dfrac{\Gamma'}{\Gamma}\left(\dfrac{1+2iu}{4}\right)}\,\du + 2M_t L_{a,\Delta}\left(\dfrac{i}{2}\right).
\end{align}
In this part we assume that $\la\geq c$ for some given fixed $c>0$. We now analyze the terms on the right-hand side above. The function $L_{a,\Delta}$ depends on the parameters $A$ and $B$, but both behave like (since $\la\geq c$)
$$
8\la e^{-2\la}+O(e^{-2\la}).
$$
Hence $|L_{a,\D}(x)| \leq K (x^2+a^2)^{-1}$ for some $K>0$. Since $(s^2+a^2)L_{a,\Delta}(s)$ has exponential type $2\pi \Delta$ and it is bounded on the real line, a routine application of the Phragm\'en–Lindel\"of principle implies that
\begin{align} \label{complexbound} 
|L_{a,\Delta}(s)|\leq K \dfrac{e^{2\pi \D |\im s|}}{|s^2+a^2|}, \quad s\in \C
\end{align}
(alternatively, one could derive such bound by direct computation). Using the bounds for $A$ and $B$ it follows that
\begin{align} 
2\, M_t L_{a,\Delta}\left(\dfrac{i}{2}\right)=\dfrac{4\pi a\Delta e^{(1-2a)\pi\Delta}}{a^2-\frac{1}{4}}+O\left(\dfrac{e^{(1-2a)\pi\Delta}}{(a^2-\frac{1}{4})^2}+\dfrac{e^{\pi\Delta}}{t^2}\right).
\end{align}Using that $M_t$ is self-adjoint and applying Stirling's approximation to obtain
$$
M_t\re\,\dfrac{\Gamma'}{\Gamma}\left(\dfrac{1+2iu}{4}\right)=\log t + O(\log(2+|u|)),
$$
we deduce that
\begin{align} 
\begin{split}
\dfrac{1}{2\pi}\int_{-\infty}^{\infty}M_t L_{a,\Delta}(u)\,\re\,\dfrac{\Gamma'}{\Gamma}\left(\dfrac{1+2iu}{4}\right) \du
&  = \dfrac{\pi\Delta\log t}{2\sinh^2(\pi a \Delta)} + O\left(\frac1a\right) \\
& = -2\pi \D e^{-2 \pi a \D}  \log t  + O\left(\tfrac1a+ \D  e^{-4\pi a \D}\log t\right) \\
& = -2\pi \D e^{-2 \pi a \D}  \log t  + O\left(\frac{1+  e^{-2\pi a \D}\log t}{a}\right).
\end{split}
\end{align}
Combining the above bounds we obtain
\begin{align} 
\begin{split}
\displaystyle\sum_{\gamma}M_tL_{a,\Delta}(\gamma-t) 
& \geq   -2\pi \D e^{-2 \pi a \D}  \log t  +\dfrac{4\pi a\Delta e^{(1-2a)\pi\Delta}}{a^2-\frac{1}{4}}  \\
& \,\,\,\,\, \,\,+ O\bigg(\dfrac{e^{(1-2a)\pi\Delta}}{(a^2-\frac{1}{4})^2}+\dfrac{e^{\pi\Delta}}{t^2}+ \frac1a\left(1+  e^{-2\pi a \D}\log t\right)\bigg).
\end{split}
\end{align}
Choosing $\pi\Delta=\log\log t$ (which is the optimal choice) and using \eqref{Poisson_before_app_GW} we obtain
	\begin{equation}  
\re\bigg(\dfrac{\zeta'}{\zeta}\bigg)'(\sigma+it)\geq -\bigg(\dfrac{-2\sigma^2+6\sigma-2}{\sigma(1-\sigma)}\bigg)\log\log t\,(\log t)^{2-2\sigma}+{O_{c}\left(\dfrac{(\log t)^{2-2\sigma}}{(\sigma-\hh)(1-\sigma)^2}\right)}
\end{equation} 
for $\pi(\sigma-1/2)\log\log t\geq c$. 
This proves {the desired result.} 
   \subsection{Proof of the upper bound} Using Proposition \ref{GW} and  Lemma \ref{lemma:maj} we obtain
\begin{align}
\displaystyle\sum_{\gamma}M_t U_{a,\Delta}(\gamma)  &\leq \dfrac{1}{2\pi}\int_{-\infty}^{\infty}M_t U_{a,\Delta}(u)\,\re{\dfrac{\Gamma'}{\Gamma}\left(\dfrac{1+2iu}{4}\right)}\,\du + 2M_t U_{a,\Delta}\left(\dfrac{i}{2}\right)\\
& \,\quad - \frac{2}{\pi}\sum_{n\geq 2}\dfrac{\Lambda(n)}{\sqrt{n}} \,\ft U_{a,\Delta}\left(\dfrac{\log n}{2\pi}\right) \cos^2 (\tfrac12 t \log n).
\end{align}
When $\la\geq \la_0$ the computations are very similar to the lower bound and we just indicate them here. We still have both $C$ and $D$ behaving like $8\la e^{-2\la}+O(e^{-2\la})$, and a bound similar to \eqref{complexbound} holds. Using Stirling's formula and Lemma \ref{lemma:maj} we get 
\begin{align} \label{15_352}
\begin{split}
\dfrac{1}{2\pi}\int_{-\infty}^{\infty}M_t U_{a,\Delta}(u)\,\re\,\dfrac{\Gamma'}{\Gamma}\left(\dfrac{1+2iu}{4}\right) \du & = \dfrac{\pi\Delta\log t}{2\cosh^2(\pi a \Delta)} + O\left(\frac1a\right) \\
& = 2\pi \D e^{-2 \pi a \D}  \log t  + O\left(\frac{1+  e^{-2\pi a \D}\log t}{a}\right).
\end{split}
\end{align}
Using the estimates for $C$ and $D$ it follows that
\begin{align} \label{0_162}
2M_t U_{a,\Delta}\left(\dfrac{i}{2}\right)=\dfrac{4\pi a\Delta e^{(1-2a)\pi\Delta}}{a^2-\frac{1}{4}}+O\bigg(\dfrac{e^{(1-2a)\pi\Delta}}{(a^2-\frac{1}{4})^2}\bigg) + O\left(\dfrac{e^{\pi\Delta}}{t^2}\right).
\end{align}
Since $\ft U_{a,\Delta}$ is supported in $[-\Delta, \Delta]$, we estimate the sum over primes (which we cannot discard as before) using Lemma \ref{lemma:majminbounds} and that $\ft f_a(y)=-2\pi^2 |y|e^{-2\pi a|y|}$ to get
\begin{align} 
-\frac{2}{\pi}\sum_{n\geq 2}\dfrac{\Lambda(n)}{\sqrt{n}} \,\ft U_{a,\Delta}\left(\dfrac{\log n}{2\pi}\right) \cos^2(\tfrac12 t \log n) & \leq  2 \sum_{2\leq n\leq e^{2\pi\Delta}}\dfrac{\Lambda(n)}{{n^{a+1/2}}} \log n \\
& = \dfrac{4\pi \Delta e^{(1-2a)\pi\Delta}}{\frac{1}{2}-a}+O\bigg(\dfrac{e^{(1-2a)\pi\Delta}}{(\frac{1}{2}-a)^2}+\dfrac{\Delta^3}{a}\bigg).
\end{align}
The above estimate follows from the prime number theorem (see \cite[Eq. (B.2)]{CChiM}). Choosing $\pi\Delta=\log\log t$ and using \eqref{Poisson_before_app_GW} we obtain
\begin{align}
\re\bigg(\dfrac{\zeta'}{\zeta}\bigg)'(\sigma+it) 
& \leq \bigg(\dfrac{-2\sigma^2+2\si+2}{\sigma(1-\sigma)}\bigg)\log\log t\,(\log t)^{2-2\sigma}+{O_{c}\bigg(\dfrac{(\log t)^{2-2\sigma}}{(\sigma-\hh)(1-\sigma)^2}\bigg)}
\end{align}
in the range $(\sigma-1/2)\log\log t\geq\lambda_0$ and $(1-\sigma)\sqrt{\log\log t} \geq c$ for some fixed $c>0$; note that $\Delta^3=O_c\left({(1/2-a)^{-2} }{(\log t)^{1-2a}}\right)$. This finishes the proof. \qed

\section{Proof of Theorem \ref{theorem1}} To obtain the bounds for the imaginary part of the log-derivative $\zeta(s)$ we will employ the interpolation technique of \cite[Section 6]{CChiM} for functions with slow growth, which we conveniently state in the form of a lemma.

\begin{lemma}[Interpolation]\label{lem:interp}
Let $\p: (t_0,\infty) \to \R$ be twice differentiable, $t_0>0$, and assume that 
$$
-\be_0(t) \leq \p(t) \leq \al_0(t) \quad \text{and} \quad -\be_2(t) \leq \p''(t) \leq \al_2(t),
$$ 
for some differentiable functions $\al_0,\be_0,\al_2,\be_2: (t_0,\infty) \to (0,\infty)$. Suppose the numbers
$$
L=\sup_{t>t_0} \, {\frac{2(\al_2(t)+\be_2(t))(\al_0(t)+\be_0(t))}{3\al_2(t)\be_2(t)}},
$$

$$M_i=\displaystyle\sup_{t>t_0}|\alpha'_i(t)| \ \ \ \ \text {and} \ \ \ \ \ N_i=\displaystyle\sup_{t>t_0}|\beta'_i(t)| \quad \quad (i=0,2)
$$
are finite. Then, for $t>t_0+\sqrt{3L}$ we have
$$
|\p'(t)| \leq \sqrt{\frac{2\al_2(t)\be_2(t)( \al_0(t)+\be_0(t))}{\al_2(t)+\be_2(t)}} + M_0+N_0+ (M_2+N_2)L.
$$
\end{lemma}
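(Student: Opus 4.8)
The plan is to estimate $\p'(t)$ by an \emph{asymmetric} two-sided finite difference, with the two step sizes chosen in a proportion governed by $\al_2(t)$ and $\be_2(t)$, so that the two one-sided Taylor remainders combine into the harmonic-type mean $\tfrac{2\al_2\be_2}{\al_2+\be_2}$ appearing in the statement rather than the larger arithmetic mean $\tfrac{\al_2+\be_2}{2}$ that a symmetric difference would produce.

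First I would fix $t>t_0+\sqrt{3L}$ and step sizes $h_1,h_2>0$ with $h_1+h_2\le\sqrt{3L}$, so that $t-h_2\in(t_0,\infty)$ (the forward point $t+h_1$ is automatically in the domain). Taylor's formula with the integral form of the remainder gives
\[ h_1\,\p'(t)=\p(t+h_1)-\p(t)-\int_0^{h_1}(h_1-v)\,\p''(t+v)\,\dd v,\qquad h_2\,\p'(t)=\p(t)-\p(t-h_2)+\int_0^{h_2}(h_2-v)\,\p''(t-v)\,\dd v, \]
and adding these isolates the derivative:
\[ \p'(t)=\frac{\p(t+h_1)-\p(t-h_2)}{h_1+h_2}+\frac{1}{h_1+h_2}\Bigl(\int_0^{h_2}(h_2-v)\,\p''(t-v)\,\dd v-\int_0^{h_1}(h_1-v)\,\p''(t+v)\,\dd v\Bigr). \]
Estimating the right-hand side from above via $\p(t+h_1)\le\al_0(t+h_1)$, $-\p(t-h_2)\le\be_0(t-h_2)$, $-\p''(t+v)\le\be_2(t+v)$ and $\p''(t-v)\le\al_2(t-v)$ — so the forward remainder is controlled by $\be_2$ and the backward one by $\al_2$ — then freezing the coefficient functions at $t$ at the cost of the Lipschitz constants ($\al_0(t+h_1)\le\al_0(t)+M_0h_1$, $\be_2(t+v)\le\be_2(t)+N_2v$, etc.) and using $\int_0^h(h-v)v\,\dd v=h^3/6$, one arrives at
\[ \p'(t)\le\frac{\al_0(t)+\be_0(t)}{h_1+h_2}+\frac{h_1^2\be_2(t)+h_2^2\al_2(t)}{2(h_1+h_2)}+\frac{M_0h_1+N_0h_2}{h_1+h_2}+\frac{N_2h_1^3+M_2h_2^3}{6(h_1+h_2)}. \]

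The final step is a two-stage optimization. For fixed $s=h_1+h_2$, the quantity $h_1^2\be_2(t)+h_2^2\al_2(t)$ is minimized (Cauchy--Schwarz) at $h_1=\tfrac{\al_2(t)}{\al_2(t)+\be_2(t)}\,s$, $h_2=\tfrac{\be_2(t)}{\al_2(t)+\be_2(t)}\,s$, with minimal value $\tfrac{\al_2(t)\be_2(t)}{\al_2(t)+\be_2(t)}\,s^2$; minimizing $s\mapsto\tfrac{\al_0(t)+\be_0(t)}{s}+\tfrac{s}{2}\cdot\tfrac{\al_2(t)\be_2(t)}{\al_2(t)+\be_2(t)}$ then gives $s^2=\tfrac{2(\al_0(t)+\be_0(t))(\al_2(t)+\be_2(t))}{\al_2(t)\be_2(t)}$ and value $\sqrt{\tfrac{2\al_2(t)\be_2(t)(\al_0(t)+\be_0(t))}{\al_2(t)+\be_2(t)}}$, the main term of the lemma. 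By the definition of $L$ this $s$ obeys $s^2\le3L$, which justifies the assumption $h_1+h_2\le\sqrt{3L}$ and, since $h_i\le s$, bounds the leftover terms by $\tfrac{M_0h_1+N_0h_2}{h_1+h_2}\le\max(M_0,N_0)\le M_0+N_0$ and $\tfrac{N_2h_1^3+M_2h_2^3}{6(h_1+h_2)}\le\tfrac{(M_2+N_2)s^2}{6}\le\tfrac{(M_2+N_2)L}{2}\le(M_2+N_2)L$. This is the claimed bound for $\p'(t)$; applying it to $-\p$ — which swaps $\al_i\leftrightarrow\be_i$ and $M_i\leftrightarrow N_i$, leaving every quantity in the conclusion unchanged — yields the matching bound for $-\p'(t)$, and hence the bound for $|\p'(t)|$.

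The one step that requires genuine care is the asymmetric choice of $h_1$ and $h_2$: an equal-step difference would only produce the arithmetic mean $\tfrac{\al_2+\be_2}{2}$ in place of $\tfrac{2\al_2\be_2}{\al_2+\be_2}$, and therefore a strictly weaker bound; moreover one must use the integral (rather than the Lagrange) form of the Taylor remainder to keep the $\p''$-Lipschitz error at $\le(M_2+N_2)L$. Everything else is routine calculus bookkeeping.
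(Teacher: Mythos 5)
Your proof is correct and is essentially the same argument the paper gives. You implement the asymmetric two-sided finite difference via Taylor's theorem with integral remainder, whereas the paper reaches the identical inequality by applying the mean value theorem to $\p'(t)-\p'(t-h)$ and then averaging over $h\in[-\nu(1-A),\nu A]$; your step sizes $h_1=\tfrac{\al_2}{\al_2+\be_2}s$, $h_2=\tfrac{\be_2}{\al_2+\be_2}s$ correspond exactly to the paper's $\nu(1-A)$ and $\nu A$ with $A=\tfrac{\be_2}{\al_2+\be_2}$, and the final optimization and error bookkeeping coincide.
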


\begin{proof}
 Since the bound is symmetric when we interchange $\al_0,\be_0$ and $\al_2,\be_2$ (i.e. we change $\p$ by $-\p$), it is enough to prove that $\p'(t)$ is bounded above by the desired bound. An application of the  mean value theorem easily gives that\footnote{The notation $h_+$ means $\max(h,0)$.}
\begin{align}
\p'(t)- \p'(t-h) & = \p''(t^*)h \\ & \leq  h_+ \al_2(t^*) + (-h)_+ \be_2(t^*) \\ & \leq  h_+ \al_2(t) + (-h)_+ \be_2(t) + (M_2+N_2)|h|^2.`
\end{align}
Averaging in $h$ in the interval $[-\nu (1-A), \nu A]$, for some $\nu>0$ (with $t-\nu>t_0$) and $0<A<1$, we obtain
\begin{align*}
& \p'(t) \\  & \leq \tfrac1{\nu}(\p(t+(1-A)\nu) -\p(t-A\nu))  + \tfrac{\nu}{2}(A^2 \al_2(t) + (1-A)^2 \be_2(t)) + \tfrac{\nu^2}3(M_2+N_2) \\
& \leq  \tfrac1{\nu}(\al_0(t+(1-A)\nu) + \be_0(t-A\nu))  + \tfrac{\nu}{2}( A^2 \al_2(t) + (1-A)^2 \be_2(t)) +  \tfrac{\nu^2}3(M_2+N_2)\\
&\leq \tfrac1{\nu}(\al_0(t) + \be_0(t))  + \tfrac{\nu}{2}( A^2 \al_2(t) + (1-A)^2 \be_2(t)) + M_0+N_0+ \tfrac{\nu^2}3(M_2+N_2).
\end{align*}
Minimizing the main term above as a function of $\nu$ and $A$, we must set 
$$
\nu=\sqrt{\frac{2(\al_2(t)+\be_2(t))(\al_0(t)+\be_0(t))}{\al_2(t)\be_2(t)}} \quad \text{and} \quad A=\frac{\be_2(t)}{\al_2(t)+\be_2(t)},
$$
which gives, for $t>t_0+\sqrt{3L}$, that
$$
\p'(t) \leq \sqrt{\frac{2\al_2(t)\be_2(t)( \al_0(t)+\be_0(t))}{\al_2(t)+\be_2(t)}} +M_0+N_0+ (M_2+N_2)L.
$$
The lemma follows.
\end{proof}

We will apply this lemma for 
$$
\p(t)=-{\log |\zeta(\sigma+it)}|
$$
noting that
$$
\p'(t)=\im\dfrac{\zeta'}{\zeta}(\sigma+it) \quad \mbox{and} \quad \p''(t)=\re\bigg(\dfrac{\zeta'}{\zeta}\bigg)'(\sigma+it).
$$
Theorem \ref{maintheorem} and \cite[Theorems ~1 and ~2]{CC} establish respectively that
\begin{align} \label{secondderivative}
-\be_0(t)\leq  \p(t)\leq \al_0(t) \quad \text{and} \quad -\be_2(t) \leq \p''(t)\leq\al_2(t) ,
\end{align}
 in the range
\begin{align} \label{rangeinproof}
\dfrac{1}{2}+\dfrac{\la_0}{\log\log t} \leq  \sigma  \leq 1 - \frac{c\sqrt{\la_0/(\la_0+c)}}{\sqrt{\log \log t}}\quad \text{and} \quad t\geq 3,
\end{align}
where $c>0$, 
\begin{align*}
\al_2(t) & =\dfrac{-2\si^2+2\si+2}{\sigma(1-\sigma)}\,\ell_{-1,\sigma}(t)+O_c\left(\dfrac{\ell_{0,\sigma}(t)}{(\sigma-\hh)(1-\sigma)^2}\right), \\
\be_2(t) & = \dfrac{-2\si^2 + 6\si - 2}{\sigma(1-\sigma)}\,\ell_{-1,\sigma}(t)+O_c\left(\dfrac{\ell_{0,\sigma}(t)}{(\sigma-\hh)(1-\sigma)^2}\right), \\
\al_0(t) & = \be_0(t) = \dfrac{-\si^2 +5\si - 2}{2\sigma(1-\sigma)} \ell_{1,\sigma}(t) + O_c\left(\dfrac{\ell_{2,\sigma}(t)}{(1-\sigma)^2}\right)
\end{align*}
and $\ell_{n,\sigma}(t)=(\log t)^{2-2\si}(\log \log t)^{-n}$.
We can then apply Lemma \ref{lem:interp} with $t_0=t_0(\si,c)$ equals to the smallest $t$ such that \eqref{rangeinproof} is not vacuous. A routine computation shows that $\sqrt{3L}=O_c(1)$ and that $M_0, M_2, N_0,N_2$ are $O_c((\sigma-\hh)^{-1}(1-\sigma)^{-2})$. We obtain
\begin{align*}
\left|\im\dfrac{\zeta'}{\zeta}(\sigma+it)\right| & \leq  \sqrt{\frac{2\al_2(t)\be_2(t)( \al_0(t)+\be_0(t))}{\al_2(t)+\be_2(t)}} +O_c\left(\dfrac{1}{(\sigma-\hh)(1-\sigma)^2}\right) \\ 
&=  \sqrt{\frac{2(-\si^2 + 5\si - 2)(-\si^2 + 3\si - 1)(-\si^2 + \si + 1)}{\si^3(1-\si)^2(2-\si)}}\,\ell_{0,\si}(t) \\
& \quad \,+O_c\left(\dfrac{\ell_{1,\sigma}(t)}{(\sigma-\hh)(1-\sigma)^2}\right)
\end{align*}
if $t' = t-\sqrt{3L} \geq t_0$. Letting $t_1(c)$ be such that $\tfrac{\log \log t'}{\log \log t}\geq \tfrac{\la_0}{\la_0+c}$ if $t\geq t_1(c)$, we conclude that the above estimate holds in the range
\begin{align} \label{rangeinproof2}
\dfrac{1}{2}+\dfrac{\la_0+c}{\log\log t} \leq  \sigma  \leq 1 - \frac{c}{\sqrt{\log \log t}}\quad \text{and} \quad t\geq t_2(c),
\end{align}
where $t_2(c)=\max(t_1(c),3+\sqrt{3L})$. To finish the proof we note that if $3\leq t \leq t_2(c)$ then a simple compactness argument gives the full desired range. \qed


\section*{Acknowledgements}
AC was supported by Grant 275113 of the Research Council of Norway. The authors are grateful to Kristian Seip for his helpful remarks and suggestions.

\end{document}